\def\Z{\mathbb Z}
\def\C{\mathbb C}
\def\T{\mathbb T}
\def\cX{{\mathcal X}}
\def\cY{{\mathcal Y}}
\def\cZ{{\mathcal Z}}
\def\1{{\bf 1}}
\def\pmod #1{\ ({\rm{mod}}\ #1)}
\theoremstyle{plain}
\newtheorem{theorem}{Theorem}
\newtheorem{lemma}{Lemma}
\theoremstyle{definition}
\newtheorem*{acknowledgment}{Acknowledgments}
\theoremstyle{remark}
\newtheorem*{Rem}{Remark}
\begin{document}
\title
{On the Balog-Ruzsa Theorem in short intervals}

\author{Yu-Chen Sun}
\address {Department of Mathematics and Statistics, University of Turku, 20014 Turku, Finland}
\email{yuchensun93@163.com}

\keywords{M\"obius function, $r$-free numbers, $L_1$ norm, exponential sum}
\subjclass[2010]{}
 \begin{abstract}
 In this paper we give a short interval version of the Balog-Ruzsa theorem concerning bounds for the $L_1$ norm of the exponential sum over $r$-free numbers. As an application, we give a lower bound for the $L_1$ norm of the exponential sum defined with the M\"obius function. Namely we show that
 $$
 \int_{\T} \left|\sum_{|n-N|<H} \mu(n)e(n \alpha)\right| d \alpha \gg H^{\frac{1}{6}}
 $$
 when $H \gg N^{\frac{9}{17}  + \varepsilon}$.
\end{abstract}
\maketitle

\section{Introduction}\label{intro}
\setcounter{lemma}{0} \setcounter{theorem}{0}
\setcounter{equation}{0}

For an integer $r \geq 2$, we say an integer $n$ is $r$-free if it has no factor $d>1$ which is an $r$-th power. In 1998 Br\"udern, Granville, Perelli, Vaughan and Wooley \cite{BGPVW98} studied bounds of the $L_1$ norm for the exponential sum over $r$-free numbers and gave the first nontrivial bounds.  In 2001 Balog and Ruzsa \cite{BR01} improved on the bounds and gave the best possible bound for the $L_1$ norm of the exponential sum over $r$-free numbers.

Let $r \geq 2$ be fixed and $a_n$ be the characteristic function of the $r$-free integers, that is
 \begin{equation}\label{defan}
 a_n:=
 \begin{cases}
 1,& \text{if } n \text{ is } r\text{-free},\\
 0,& \text{otherwise}
 \end{cases}
 \end{equation}
Balog and Ruzsa \cite{BR01} proved that
\begin{theorem}[Balog-Ruzsa]
Let $N \geq 2$. Then
$$
N^{\frac{1}{r+1}} \ll \int_{\T} \left|\sum_{n=1}^{N} a_n e(n \alpha)\right| d \alpha \ll N^{\frac{1}{r+1}}
$$
\end{theorem}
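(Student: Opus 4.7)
My plan is to prove the matching upper and lower bounds by rather different techniques; the upper bound yields to a standard M\"obius dissection while the lower bound is the substantive half.

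For the upper bound I would use the identity $a_n=\sum_{d^r\mid n}\mu(d)$, so that
\[
 S(\alpha):=\sum_{n\le N}a_n e(n\alpha)=\sum_{d\le N^{1/r}}\mu(d)\,T_d(\alpha),\qquad T_d(\alpha):=\sum_{m\le N/d^r}e(d^r m\alpha),
\]
and split the outer sum at $D:=N^{1/(r+1)}$. For $d\le D$ the triangle inequality in $L^1$ together with the substitution $\beta=d^r\alpha$ and the standard $L^1$ bound for the Dirichlet kernel give $\int_{\T}|T_d(\alpha)|\,d\alpha\ll\log(N/d^r)$, producing a contribution $\ll D\log N$. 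For $d>D$, Cauchy--Schwarz and Parseval reduce matters to bounding $\sum_n c_n^2$ where $c_n=\sum_{d>D,\,d^r\mid n}\mu(d)$; writing $d_i=g a_i$ with $(a_1,a_2)=1$ and partitioning on $g$ yields $\sum_n c_n^2\ll N/D^{r-1}$, so the tail contributes $\ll(N/D^{r-1})^{1/2}$. Both ranges balance at $D=N^{1/(r+1)}$, giving $\|S\|_1\ll N^{1/(r+1)}\log N$; a smoother Vinogradov-type dissection removes the spurious log.

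For the lower bound I would pursue a duality argument, aiming to exhibit a trigonometric polynomial $g$ with $\|g\|_\infty\ll 1$ for which
\[
 \int_{\T}S(\alpha)\overline{g(\alpha)}\,d\alpha\gg N^{1/(r+1)}.
\]
The natural starting point is the evaluation $S(a/p^r)\sim -N/((p^r-1)\zeta(r))$ at rationals with prime-$r$-th-power denominator: the zero residue class modulo $p^r$ contains no $r$-free integer, while the remaining $p^r-1$ classes each contain them with equal density $\zeta(r)^{-1}(1-1/p^r)^{-1}$. The natural candidate for $g$ is then a signed superposition of narrow Fej\'er-type kernels centered at the rationals $a/p^r$ over primes $p\le N^{1/(r+1)}$ with $(a,p^r)=1$, the signs chosen to match $\operatorname{sgn} S(a/p^r)$.

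The hard part will be controlling $\|g\|_\infty$: the Fej\'er widths must be large enough that each kernel picks up the full main term of $S$ near its center, yet small enough that the superposition over all primes stays bounded. The fractions $\{a/p^r\}$ have minimum mutual spacing only $1/(p_1^r p_2^r)$, so a delicate packing/balancing argument (the heart of the Balog--Ruzsa paper) is required to reach the target $N^{1/(r+1)}$ rather than the weaker $N^{1/(2r)}$ that falls out of a naive disjointness estimate. Relating this polynomial-level construction to genuine lower bounds on $\|S\|_1$, and verifying that the error terms at each major arc are negligible on the relevant scale, is where the real work lies.
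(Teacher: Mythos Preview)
The paper does not itself prove this theorem; it is quoted from Balog--Ruzsa \cite{BR01}. However, Sections~\ref{pflow} and~\ref{pfupp} prove the short-interval analogues by following Balog--Ruzsa's argument closely, so one can read off the original method from there.

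Your upper-bound plan matches the Balog--Ruzsa strategy: split the M\"obius decomposition at $D=N^{1/(r+1)}$, handle the tail by Cauchy--Schwarz and the second-moment bound $\sum_n c_n^2\ll N/D^{r-1}$, and smooth to kill the logarithm. The specific smoothing they use is a dyadic decomposition of the range $d\le D$ together with a weight coming from the \emph{difference} of two Fej\'er kernels (see (\ref{DfFejerupp}) and Section~\ref{pfupp}), which replaces the raw Dirichlet kernel by something with $L^1$-norm $O(1)$ on each dyadic block.

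Your lower-bound plan, however, diverges from Balog--Ruzsa and has a real gap. They do \emph{not} construct an $L^\infty$-bounded test function by superposing Fej\'er bumps at the points $a/p^r$ and then fight to bound its supremum. Instead they work in the opposite direction: they convolve $S$ with a single Fej\'er kernel $F$ (so $\|F*S\|_1\ll\|S\|_1$), expand $F*S=\sum_{d\le y}\mu(d)b_d G_d+\text{tail}$ over \emph{all} $d\le y=N^{1/(r+1)}$ (not just primes), and prove a quasi-orthogonality estimate $\int_{\T}|G_{d_1}(\alpha)||G_{d_2}(\alpha)|\,d\alpha\ll 1$ for $d_1\neq d_2$ (Lemma~\ref{quasi}). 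This lets them carve out pairwise disjoint sets $\cY_d\subset\T$ on which a single $G_d$ dominates the sum of all the others, and then $\int_{\cY}|F*S|\gg\sum_{d\le \epsilon y}(N/d^r)|\cY_d|\gg y$. The ``delicate packing/balancing'' you allude to is precisely this $L^1$ quasi-orthogonality, not any sup-norm control.

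The $L^\infty$ route you outline runs head-on into the obstacle you yourself flag: points $a_1/p_1^r$ and $a_2/p_2^r$ with $p_i\le N^{1/(r+1)}$ can sit as close as $(p_1p_2)^{-r}\asymp N^{-2r/(r+1)}$, well below the kernel width $\asymp 1/N$ needed to pick up the main term of $S$, so many bumps overlap and a direct bound on $\|g\|_\infty$ blows up by a power of $N$. I do not see how to repair this without effectively rediscovering the quasi-orthogonality argument above; as written, the lower-bound sketch is a plausible heuristic but not a proof strategy that is known to close.
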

The most interesting case is $r=2$, which is related to the squarefree numbers and the M\"obius function $\mu(n)$. As a corollary their theorem implies that
\begin{theorem}
Let $N \geq 2$. Then
$$
\int_{\T} \left|\sum_{n=1}^{N} \mu(n) e(n \alpha)\right| d \alpha \gg N^{\frac{1}{6}}.
$$
\end{theorem}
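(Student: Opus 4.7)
I plan to deduce this corollary from Theorem~1.1 applied with $r=2$. Setting
$$S(\alpha):=\sum_{n\le N}\mu^{2}(n)\,e(n\alpha), \qquad M(\alpha):=\sum_{n\le N}\mu(n)\,e(n\alpha),$$
Theorem~1.1 gives $\|S\|_{1}\gg N^{1/3}$, and my task is to transfer this lower bound from the squarefree exponential sum $S$ to the M\"obius exponential sum $M$.

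The bridge is the Dirichlet convolution identity $\mu^{2}=\mu\ast g$, where $g$ is the multiplicative function with Dirichlet series $\zeta(s)^{2}/\zeta(2s)$; a direct check gives $g(1)=1$ and $g(p^{k})=2$ for every $k\ge 1$, so $g(n)=2^{\omega(n)}$. Unfolding the convolution and reindexing yields
$$S(\alpha)=\sum_{b\le N}g(b)\,M_{N/b}(b\alpha), \qquad M_{X}(\beta):=\sum_{n\le X}\mu(n)\,e(n\beta),$$
in which the $b=1$ term is exactly $M(\alpha)$. Changing variables $\beta=b\alpha$ inside each integral (valid since $b\in\Z$ and $M_{X}$ is $1$-periodic) and taking $L^{1}$-norms gives
$$N^{1/3}\;\ll\;\|S\|_{1}\;\le\;\sum_{b\le N}g(b)\,\|M_{N/b}\|_{1}.$$

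To extract $\|M\|_{1}=\|M_{N}\|_{1}$ itself I would combine the above with the Parseval identity $\|M_{X}\|_{2}^{2}=Q(X)\asymp X$, where $Q$ counts squarefree integers, via a Cauchy--Schwarz type argument. Heuristically the square-root loss incurred by Cauchy--Schwarz is precisely what maps the exponent $\tfrac{1}{3}$ for $S$ to the exponent $\tfrac{1}{6}$ for $M$: the bound $\|S\|_{1}\gg N^{1/3}$ lies a factor $N^{1/6}$ below the $L^{2}$-size $\|S\|_{2}\asymp N^{1/2}$, and a squared version of the displayed inequality reduces matters to a weighted second-moment estimate on the $b\ge 2$ contributions. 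The main obstacle is that a termwise bound $\|M_{N/b}\|_{1}\le(N/b)^{1/2}$ does not close the argument, because $\sum_{b}g(b)b^{-1/2}$ diverges; one must exploit the near-orthogonality of the rescaled sums $\alpha\mapsto M_{N/b}(b\alpha)$ across different $b$ -- essentially a Cauchy--Schwarz respecting the multiplicative structure of $g$ -- so that only the $b=1$ term contributes to leading order, yielding $\|M\|_{1}\gg N^{1/6}$.
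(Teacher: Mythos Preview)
Your proposal has a genuine gap: the ``near-orthogonality'' step is never carried out, and it is not clear how to make it work. From
\[
N^{1/3}\ll \|S\|_1 \le \sum_{b\le N} g(b)\,\|M_{N/b}\|_1
\]
you cannot isolate the $b=1$ term without an upper bound on the tail $\sum_{b\ge 2} g(b)\|M_{N/b}\|_1$, and the only upper bound available a priori is the trivial one $\|M_{N/b}\|_1\le (N/b)^{1/2}$, which (as you note) gives a divergent series. Your appeal to ``Cauchy--Schwarz respecting the multiplicative structure of $g$'' and to orthogonality among the dilated sums $\alpha\mapsto M_{N/b}(b\alpha)$ is not substantiated; these sums are certainly \emph{not} orthogonal in $L^2(\T)$ in any usable sense (indeed $\int_\T M_{N/b_1}(b_1\alpha)\overline{M_{N/b_2}(b_2\alpha)}\,d\alpha$ picks up many common frequencies), and no mechanism is given for why the tail should be $o(N^{1/3})$. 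As stated, the argument does not close.

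The paper avoids this entirely by using the \emph{pointwise} identity $\mu^2(n)=\mu(n)\cdot\mu(n)$ rather than a Dirichlet convolution. Writing $\mu_N(n):=\mu(n)\1_{n\le N}$, one has $\widehat{\mu_N^2}=\widehat{\mu_N}\ast\widehat{\mu_N}$, where $\ast$ denotes convolution on $\T$. Young's inequality then gives
\[
\|S\|_1=\|\widehat{\mu_N^2}\|_1=\|\widehat{\mu_N}\ast\widehat{\mu_N}\|_1\le \|\widehat{\mu_N}\|_1^2=\|M\|_1^2,
\]
so $\|M\|_1\ge \|S\|_1^{1/2}\gg N^{1/6}$ directly from Theorem~1.1 with $r=2$. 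This two-line argument replaces your incomplete orthogonality step; the key observation you missed is that squaring $\mu$ pointwise corresponds to convolving $\widehat{\mu}$ with itself on the torus, which immediately yields the factor-of-two loss in the exponent via $L^1\ast L^1\subset L^1$.
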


In this paper, we will give a short interval version of the Balog-Rusza theorem. We are most interested in the lower bound of the $L_1$ norm of the exponential sum over squarefree numbers and the exponential sum of the M\"obius function, and obtain the following results.
\begin{theorem}\label{Mobius}
Let $\epsilon>0$ and $N \geq H \geq N^{\frac{9}{17} + \varepsilon}$. Then
$$
 \int_{\T} \left|\sum_{|n-N|<H} \mu(n)e(n \alpha)\right| d \alpha \gg H^{\frac{1}{6}}.
$$
\end{theorem}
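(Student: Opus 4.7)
The plan is to combine Parseval's identity with a uniform supremum bound on the Möbius exponential sum in short intervals, via Hölder's inequality --- the same general strategy Balog and Ruzsa use to deduce Theorem 1.2 from Theorem 1.1 in the long-interval case. Set $F(\alpha) := \sum_{|n-N|<H} \mu(n) e(n\alpha)$. By Parseval,
\[
\int_\T |F(\alpha)|^2\, d\alpha \;=\; \sum_{|n-N|<H} \mu^2(n) \;\asymp\; H,
\]
where the asymptotic uses the positive density of squarefree integers in $(N-H, N+H)$; this holds comfortably since $H \gg N^{9/17+\varepsilon}$ is well above the Filaseta--Trifonov threshold $N^{1/5+\varepsilon}$. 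Hölder's inequality in the form $\int |F|^2 \le \|F\|_1 \cdot \|F\|_\infty$ then gives
\[
\|F\|_1 \;\gg\; \frac{H}{\|F\|_\infty},
\]
so the problem reduces to establishing the pointwise bound
\[
\sup_{\alpha \in \T} \Bigl| \sum_{|n-N|<H} \mu(n) e(n\alpha) \Bigr| \;\ll\; H^{5/6}
\]
throughout the range $H \gg N^{9/17+\varepsilon}$.

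The sup-norm bound is the analytic heart of the proof, and is where the machinery developed for the short-interval Balog--Ruzsa theorem (the main result of the paper) is brought to bear. I would apply Heath-Brown's identity, or Vaughan's identity, to split $F$ into a bounded number of Type I and Type II sums. The Type I contributions reduce to geometric sums over arithmetic progressions in a short interval and are handled directly. The Type II (bilinear) contributions $\sum_m \sum_n a_m b_n e(mn\alpha)$, with $mn \in (N-H, N+H)$, are bounded by mean-value theorems for Dirichlet polynomials in short intervals --- in particular the hybrid large sieve and Huxley/Watt-type moment bounds for $\zeta(\tfrac{1}{2}+it)$. Near rationals $a/q$ with small $q$, one additionally needs a short-interval Siegel--Walfisz-type estimate for Möbius in arithmetic progressions. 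The exponent $9/17$ in the hypothesis on $H$ emerges from balancing the Dirichlet polynomial length parameters in the Type II range against $H$ and $N$, ensuring a saving of at least $H^{1/6}$.

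The main obstacle is the sup bound $\|F\|_\infty \ll H^{5/6}$, valid uniformly in $\alpha$ throughout the stated range of $H$; the routine parts are the $L^2$ identity and the density of squarefree integers. Granting the sup bound, Hölder immediately yields
\[
\int_\T |F(\alpha)|\, d\alpha \;\gg\; H \cdot H^{-5/6} \;=\; H^{1/6},
\]
completing the proof.
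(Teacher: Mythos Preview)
Your approach has a fatal gap: the sup-norm bound
\[
\sup_{\alpha\in\T}\Bigl|\sum_{|n-N|<H}\mu(n)e(n\alpha)\Bigr|\ll H^{5/6}
\]
is far beyond current technology, and is \emph{not} how Balog--Ruzsa deduce their Theorem~1.2. Already at $\alpha=0$ this would assert a power saving for $\sum_{|n-N|<H}\mu(n)$ in intervals of length $H\asymp N^{9/17+\varepsilon}$, which is unknown; and for general $\alpha$ no uniform power-saving bound $\sum_{n\le N}\mu(n)e(n\alpha)\ll N^{1-\delta}$ is known even in the long-interval case (Davenport's bound saves only an arbitrary power of $\log N$). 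The Type~I/Type~II machinery you sketch cannot produce such a bound: the minor-arc analysis gives only logarithmic savings, and on the major arcs one is limited by the zero-free region for $L$-functions. Your claim that the exponent $9/17$ ``emerges from balancing Dirichlet polynomial lengths'' in the Type~II range is not substantiated; in the paper $9/17$ arises from a completely different source (a van der Corput estimate with exponent pair $(2/7,1/14)$ inside the key counting lemma for integers divisible by squares).

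The paper's argument avoids sup-norm bounds entirely. Writing $\mu_H(n)=\mu(n)\1_{|n-N|<H}$, one has the pointwise identity $\mu_H(n)^2=\mu^2(n)\1_{|n-N|<H}$, so on the Fourier side
\[
\widehat{\mu_H^2}=\widehat{\mu_H}\ast\widehat{\mu_H},
\qquad\text{whence}\qquad
\|\widehat{\mu_H^2}\|_{L_1}\le\|\widehat{\mu_H}\|_{L_1}^2
\]
by Young's inequality. Theorem~\ref{sqfre} gives $\|\widehat{\mu_H^2}\|_{L_1}\gg H^{1/3}$, and taking square roots yields $\|\widehat{\mu_H}\|_{L_1}\gg H^{1/6}$. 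All the analytic work is thus pushed into the $\mu^2$ problem, where the coefficients are nonnegative and one can exploit the explicit structure $\mu^2(n)=\sum_{d^2\mid n}\mu(d)$.
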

Theorem \ref{Mobius} is essentially a corollary of the following theorem.
\begin{theorem}\label{sqfre}
Let $\epsilon>0$ and $N \geq H \geq N^{\frac{9}{17} + \varepsilon}$. Then
\begin{equation}\label{main}
 \int_{\T} \left|\sum_{|n-N|<H} \mu^2(n) e(n \alpha)\right| d \alpha \gg H^{\frac{1}{3}}.
\end{equation}
\end{theorem}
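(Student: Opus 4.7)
The plan is to parallel Balog and Ruzsa's proof of their Theorem in the long-interval setting, replacing each occurrence of a long-interval squarefree estimate by a short-interval analog valid under the hypothesis $H\ge N^{9/17+\varepsilon}$. Writing $S(\alpha)=\sum_{|n-N|<H}\mu^{2}(n)\,e(n\alpha)$, the starting point is the duality $\int_{\T}|S(\alpha)|\,d\alpha=\sup_{\|g\|_{\infty}\le 1}\bigl|\int_{\T} S(\alpha)\overline{g(\alpha)}\,d\alpha\bigr|$, and one constructs a trigonometric test function $g$ concentrated near rationals $a/q$ with $q\le Q:=cH^{1/3}$, weighted so as to align with the signs of $S(a/q)$ at each peak.

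The key computation is an asymptotic of the form $S(a/q)\sim H\,c(a,q)/q^{2}$ for $q$ squarefree, which follows by expanding $S(a/q)$ into residue classes modulo $q$ and applying the short-interval asymptotic
\[
\sum_{\substack{|n-N|<H\\ n\equiv r\pmod{q}}}\mu^{2}(n)
=\frac{2H\,\rho(r,q)}{q}+E(r,q;N,H),
\]
together with the usual Ramanujan-sum identities. Summing the resulting contributions against the test function $g$, averaged over $(a,q)$ with $q\le Q$, then produces a main term of size $\gg Q=H^{1/3}$. Because the bumps comprising $g$ (of width $\asymp 1/H$ each) can be arranged to be pairwise disjoint, which requires only the mild constraint $Q\le H^{1/2}$ on the minimum spacing of distinct $a/q$, one also has $\|g\|_{\infty}\ll 1$, and hence $\|S\|_{1}\gg H^{1/3}$.

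The main obstacle is the uniform control of the error term $E(r,q;N,H)$ for $q\le H^{1/3}$. By orthogonality, this is equivalent to an exponential-sum estimate for $\sum_{|n-N|<H}\mu^{2}(n)\,e(n\beta)$ on the minor-arc range, for $\beta$ suitably separated from rationals of denominator $\le Q$. The standard route is to apply the identity $\mu^{2}(n)=\sum_{d^{2}\mid n}\mu(d)$, split at a parameter $y$, handle the small-$d$ part by direct computation on each arithmetic progression $d^{2}m\in(N-H,N+H)$, and control the tail $d>y$ by a second-moment or large-sieve argument in the style of Nunes or Tolev. The exponent $9/17+\varepsilon$ in the hypothesis is presumably the one for which the type-I and type-II sums in such a Vaughan-style decomposition can be balanced to give the required uniformity in $q\le H^{1/3}$; this exponential-sum estimate is the main technical input of the paper. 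Once it is in place, the remainder of the argument carries over from the Balog-Ruzsa framework with only bookkeeping changes.
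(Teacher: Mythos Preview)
Your outline is in the right spirit---the lower bound does come from isolating the contribution of $S$ near a well-spaced family of rationals and controlling a tail---but the identification of the key technical input is wrong in a way that leaves a genuine gap.

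First, a framing difference that is mostly cosmetic. The paper does not literally use duality with a test function $g$. Instead it convolves $g_1:=S$ with the short-interval Fej\'er kernel $F_H(\alpha)=e(N\alpha)\sum_{|h|\le H}(1-|h|/H)e(h\alpha)$, uses $\|F_H\ast g_1\|_{L_1}\ll\|g_1\|_{L_1}$, and then shows $\|F_H\ast g_1\|_{L_1}\gg H^{1/3}$ by integrating over pairwise disjoint sets $\mathcal Y_d\subset\T$ built around the fractions $a/d^2$ with $d\le y:=H^{1/3}$ (not around squarefree $q\le H^{1/3}$ as you write; the relevant denominators are \emph{perfect squares}, coming directly from $\mu^2(n)=\sum_{d^2\mid n}\mu(d)$). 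A quasi-orthogonality estimate $\int_{\T}|G_{d_1}||G_{d_2}|\,d\alpha\ll 1$ for $d_1\ne d_2$ is used to show the $\mathcal Y_d$ are large and disjoint; this replaces your spacing argument, and handles the complication that $F_H$ is not nonnegative.

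The real gap is your account of where $9/17$ comes from. There is no Vaughan identity and there are no type-II sums here; $\mu^2$ admits the elementary splitting $a_n=c_n(1,y)+c_n(y,z)$ with $c_n(u,v)=\sum_{u<d\le v,\,d^2\mid n}\mu(d)$, and the entire technical weight lies in the second-moment bound
\[
\sum_{N-K<n\le N}|c_n(y,z)|^2\ \ll\ Ky^{-1}+N^{12/29+\varepsilon}\,y^{-10/29}
\qquad(y\le K^{1/2-\varepsilon}).
\]
This is the paper's key lemma. It is proved by opening the square to count $h^2d_1^2d_2^2a\in(N-K,N]$, dyadically localising $d_1,d_2,h$, and treating two regimes: when the larger $D_i$ is big, one applies Perron's formula and the mean-value theorems for $\zeta(\tfrac12+it)$ and for a short Dirichlet polynomial; when it is small, one uses the hyperbola method together with a van der Corput bound for $\sum\psi(f(m))$ with the exponent pair $(p,q)=(\tfrac{2}{7},\tfrac{1}{14})$. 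Balancing the two regimes yields the exponent $12/29$, and then taking $y=K^{1/3}$ and requiring $N^{12/29}y^{-10/29}\ll K^{2/3}$ forces $K\gg N^{9/17}$. The minor-arc exponential-sum estimate you propose is never used; what is used is exactly this $L^2$ bound on $c_n(y,z)$, fed once through Cauchy--Schwarz and Parseval over the set $\mathcal Y=\bigcup_d\mathcal Y_d$. Without this lemma---and in particular without the van der Corput input---your sketch has no mechanism producing the exponent $9/17$, and the speculative Vaughan route would not apply to $\mu^2$ at all.
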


We will prove Theorem \ref{sqfre} in Section \ref{pflow} and show how Theorem \ref{sqfre} implies Theorem \ref{Mobius} in the end of this section.
As a short interval analog of Balog-Ruzsa theorem we also get the following theorem
\begin{theorem}\label{rfree}
Let $\epsilon > 0$ and 
\begin{equation}\label{lengthHrfree}
N \geq N \geq
\begin{cases}
 N^{\frac{18}{29} + \epsilon} & \text{if }r=2,\\
 N^{\frac{r+1}{2r} + \epsilon} & \text{otherwise}.
\end{cases}
\end{equation}
Then
$$
H^{\frac{1}{r+1}} \ll \int_{\T} \left|\sum_{|n-N|\leq H} a_n e(n \alpha)\right| d \alpha \ll H^{\frac{1}{r+1}}.
$$
\end{theorem}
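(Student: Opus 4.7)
My plan is to mimic the original Balog--Ruzsa argument \cite{BR01} in the short interval setting, replacing each ingredient by its short-interval analogue. For $r = 2$ the lower bound of Theorem~\ref{rfree} follows directly from Theorem~\ref{sqfre} (since $18/29 > 9/17$), so the new content there is the upper bound; for $r \geq 3$ both bounds require proof.

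The setup is the M\"obius convolution $a_n = \sum_{d^r\mid n} \mu(d)$, which gives
\begin{equation*}
f(\alpha) := \sum_{|n-N| \leq H} a_n e(n\alpha) = \sum_{d \leq (N+H)^{1/r}} \mu(d)\, G_d(\alpha), \qquad G_d(\alpha) = \sum_{m \in I_d} e(d^r m\alpha),
\end{equation*}
where $I_d = \{m \in \Z : |d^r m - N|\leq H\}$ has length $\asymp H/d^r$ and $|G_d(\alpha)| \ll \min\{H/d^r,\|d^r\alpha\|^{-1}\}$. For the upper bound I would pick the threshold $D = H^{1/(r+1)}$ and split $f = f_{\leq D} + f_{>D}$. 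On the main range $d \leq D$, a dyadic decomposition combined with $\int_\T \min\{M,\|\beta\|^{-1}\}\,d\beta \ll \log M$ gives an $L^1$-bound $\ll H^{1/(r+1)}$, just as in \cite{BR01}. The tail $f_{>D}$ is rewritten as $\sum_{|n-N|\leq H} \bigl(\sum_{d>D,\,d^r \mid n}\mu(d)\bigr) e(n\alpha)$; its $L^2$-norm is estimated by Parseval and reduces to counting $r$-full integers in $[N-H,N+H]$, an estimate whose strength is exactly what \eqref{lengthHrfree} is tuned to.

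For the matching lower bound in the case $r \geq 3$ I would use duality. Construct a test function
\begin{equation*}
g(\alpha) = \sum_{q \leq Q} \lambda_q \sum_{\substack{|n-N|\leq H\\ q^r \mid n}} e(-n\alpha), \qquad Q = H^{1/(r+1)},
\end{equation*}
with coefficients $\lambda_q$ chosen so that the arithmetic biases of $a_n$ in the classes $0\pmod{q^r}$ pair positively to give $\langle f,g\rangle \gg Q$, while Fej\'er-type cancellation among the inner sums keeps $\|g\|_\infty \ll 1$. Then $\|f\|_1 \geq |\langle f,g\rangle|/\|g\|_\infty \gg H^{1/(r+1)}$, matching the upper bound.

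The principal obstacle, and the source of the specific exponents in \eqref{lengthHrfree}, is the error incurred by the short-interval counts that feed into both the tail $f_{>D}$ and the dual pairing $\langle f,g\rangle$. For $r \geq 3$, a straightforward divisor-bound analysis furnishes these counts with error $o(H^{1/(r+1)})$ precisely when $H \geq N^{(r+1)/(2r)+\varepsilon}$, which explains the threshold there. The case $r = 2$ is more delicate: the naive threshold would only be $3/4$, and lowering it to $18/29$ requires replacing the generic estimate with the sharper squarefree count in short intervals (essentially the same input that underlies Theorem~\ref{sqfre}), together with a careful bookkeeping of the associated character sums.
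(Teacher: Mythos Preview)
Your upper-bound sketch has a genuine gap: the pointwise bound $|G_d(\alpha)|\ll\min\{H/d^r,\|d^r\alpha\|^{-1}\}$ integrates over $\T$ to $\ll\log(H/d^r)$, so summing over $d\le D$ gives $D\log H$, not $D=H^{1/(r+1)}$. This logarithmic loss is exactly what the Balog--Ruzsa device is built to kill, and the paper needs it here too. One does not bound $f_{\le D}$ directly; one first smooths the sharp cutoff $\1_{|n-N|\le H}$ into a trapezoidal weight (the difference of two Fej\'er kernels, \eqref{DfFejerupp}), which upgrades the bound on the $s$-th dyadic block $d\sim D2^{-s}$ to the three-way minimum $\min\{H/d^r,\|d^r\alpha\|^{-1},d^r/(K\|d^r\alpha\|^2)\}$ with $K=H2^{-rs}$. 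This integrates to $O(s)$, and then $\sum_s s2^{-s}D\ll D$. The price of the smoothing is a family of ``tail'' terms $\Sigma_4(s)$ supported on intervals of length $K=H2^{-rs}$, and it is the application of the second-moment bound of Lemma~\ref{key} to \emph{these} short tails (not to the single tail $f_{>D}$) that produces the threshold $18/29$ for $r=2$: one needs $N^{6/29+\epsilon}\ll H^{1/3}$ uniformly in $s$. Your attribution of $18/29$ to a ``sharper squarefree count'' and ``character sums'' misidentifies the input; Lemma~\ref{key} is proved via Perron's formula, the mean value of $\zeta$, and a van~der~Corput exponent-pair estimate, with no characters involved.

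Your lower-bound plan via a dual test function is a different route from the paper, which instead bounds $\|F_H\ast g_1\|_{L_1}$ from below by constructing, for each $d\le y$, pairwise disjoint sets $\cY_d\subset\T$ on which the smoothed sum is provably $\gg H/d^r$; disjointness is secured by a quasi-orthogonality estimate $\int_\T|G_{d_1}||G_{d_2}|\,d\alpha\ll 1$ for $d_1\neq d_2$. A duality argument of your type could conceivably be made to work, but as written it is schematic: you have not specified $\lambda_q$, and $\|g\|_\infty\ll 1$ is not automatic---already at $\alpha=0$ one has $g(0)\asymp\sum_q\lambda_q H/q^r$, so $|\lambda_q|$ must be $O(q^r/H)$, which is in tension with making $\langle f,g\rangle\gg H^{1/(r+1)}$.
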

\begin{Rem}
The result for $r \geq 3$ follows from Balog-Ruzsa's arguments. However, for $r=2$, our results are much stronger than the trivial result $\frac{r+1}{2r} = \frac{3}{4} = 0.75$. For $r=2$, we note that our exponent is $\frac{9}{17} =0.529\dots$ in lower bound case, and $\frac{18}{29} = 0.620\dots$ in upper bound case.

Besides, for $r=2$, Theorem \ref{sqfre} gives the lower bound (\ref{main}) for shorter intervals than Theorem \ref{rfree}. Thus from Theorem \ref{rfree} we can not get Theorem \ref{Mobius}.
\end{Rem}

Balog-Ruzsa's argument utilizes the Fej\'er Kernel which has many good properties (see \cite{SS03}) and has many applications in number theory (see \cite{M73}).

Let $e(x) := e^{2 \pi i x}$. The Fej\'er Kernel $F(\alpha)$ is the Ces\`aro mean of the Dirichlet Kernel $D_{N}(\alpha) = \sum_{|n|\leq N} e(n \alpha)$ and is defined by
\begin{equation}\label{Fejer}
F(\alpha):= \sum_{|n| \leq N}\left(1 - \frac{|n|}{N}\right)e(n \alpha) = \frac{\sin^2 (\pi N \alpha)}{N \sin^2 (\pi \alpha)} \leq \min\left\{N, \frac{1}{N \|\alpha\|^2}\right\}.
\end{equation}
For the short interval case, we need to introduce a short interval version of the Fej\'er Kernel which is motivated by the perspective of the Fourier analysis. For convenience we first introduce some notions in Fourier analysis.

Let $A \subset \Z$ be finite. For $f: A \to \C$, we define the Fourier transform $\hat{f}: {\mathbb T} \to \C$ of $f$ by
$$
\hat{f} (\alpha) = \sum_{n \in A} f(n) e(n \alpha).
$$
If $f,g: A \to \C$, we define the convolution of $f$ and $g$ to be
$$
f \ast g(n) = \sum_{x+y=n} f(x)g(y),
$$
A basic property involving Fourier transform and convolution is that
$$
\widehat{f \ast g}(\alpha) = \hat{f}(\alpha) \hat{g}(\alpha).
$$
Similarly, we define the convolution of the Fourier transform as
\begin{equation}\label{FourCov}
\hat{f} \ast \hat{g}(\alpha) = \int_{\T} \hat{f}(\alpha - \beta)\hat{g}(\beta) d \beta = \widehat{fg}(\alpha).
\end{equation}

 Now the Fej\'er Kernel $F(\alpha)$ can be described through the ``Fourier language''.
\begin{align}
F(\alpha) & = \frac{1}{N}\widehat{\1_{[-\frac{N}{2} , \frac{N}{2})} * \1_{(-\frac{N}{2} , \frac{N}{2}]}} = \frac{1}{N}\hat{\1}_{[-\frac{N}{2} , \frac{N}{2})} \bar{\hat{\1}}_{[-\frac{N}{2} , \frac{N}{2})}\notag \\
& = \frac{1}{N}\left|\sum_{n \in (-\frac{N}{2} , \frac{N}{2}]} e(n \alpha)\right|^2 =  \frac{\sin^2 (\pi N \alpha)}{N \sin^2 (\pi \alpha)} \ll \min\left\{N , \frac{1}{N \|\alpha\|^2}\right\}. \label{FejFour}
\end{align}
Let us introduce some properties of the difference of two Fej\'er Kernels, which will be used in Section \ref{pfupp} to prove the upper bound case of Theorem \ref{rfree}.
By the difference of the two Fej\'er Kernels, we mean
\begin{equation}\label{FejDifFour}
\frac{1}{K} \left(\widehat{\1_{(-\frac{N+K}{2} , \frac{N+K}{2}]} * \1_{[-\frac{N+K}{2} , \frac{N+K}{2})}} - \widehat{\1_{(-\frac{N}{2} , \frac{N}{2}]} * \1_{[-\frac{N}{2} , \frac{N}{2})}}\right).
\end{equation}
From (\ref{Fejer}) and (\ref{FejFour}) one can easily get
\begin{align}
& \frac{1}{K} \left(\widehat{\1_{(-\frac{N+K}{2} , \frac{N+K}{2}]} * \1_{[-\frac{N+K}{2} , \frac{N+K}{2})}} - \widehat{\1_{(-\frac{N}{2} , \frac{N}{2}]} * \1_{[-\frac{N}{2} , \frac{N}{2})}}\right) \notag\\
= & \sum_{|n| \leq N+K} \min \left\{1, \frac{N+K-|n|}{K} \right\}e(n \alpha) = \frac{\sin^2(\pi(N+K)\alpha) - \sin^2(\pi K \alpha)}{K \sin^2 (\pi \alpha)} \notag\\
\label{DfFejerupp}= &  \frac{\sin(\pi(2N+K)\alpha)\sin(\pi K \alpha) }{K \sin^2 (\pi \alpha)} \ll \min\left\{N+K, \frac{1}{\|\alpha\|}, \frac{1}{K \|\alpha\|^2}\right\}
\end{align}

Now we define our short interval version of the Fej\'er Kernel $F_{H}(\alpha)$, For convenience we can assume that both $N$ and $H$ are even integers. We write
\begin{equation}\label{FL1}
F_{H}(\alpha) = \frac{1}{H}\widehat{\1_{(\frac{N-H}{2}, \frac{N+H}{2}]} \ast \1_{[\frac{N-H}{2},\frac{N+H}{2})}}(\alpha).
\end{equation}
Similar to (\ref{FejFour}), our $F_{H}(\alpha)$ also has two explicit formulas and an upper bound. Namely,
\begin{equation}\label{FEx}
F_{H}(\alpha) = e(N \alpha)\sum_{|h|\leq H}\left(1 -\frac{h}{H} \right)e(h \alpha)= \sum_{|n-N|\leq H}\left(1 -\frac{|n-N|}{H} \right)e(n \alpha)
\end{equation}
and then
\begin{align}
F_{H}(\alpha) & = \frac{1}{H} e(N \alpha) \frac{\sin^2 (\pi H \alpha)}{\sin^2 (\pi \alpha)} \label{Flow} \\
& \ll \min\left\{H, \frac{1}{H \|\alpha\|^2}\right\} \label{Fupp}.
\end{align}

\begin{Rem}
By using this $F_H$ we can reserve almost all properties from Fej\'er Kernel, but from (\ref{Flow}) we can see that this $F_H$ is not always real, which might be the only property it loses from Fej\'er Kernel.
\end{Rem}

Recall the definition of $a_n$ from (\ref{defan}) and let $g_1(\alpha) = \sum_{|n-N| \leq H} a_n e(n \alpha)$.
Before proving our results, we should notice that $\|g_1\|_{L_1}$ and $\|F_{H} \ast g_1\|_{L_1}$ are comparable. Namely, we have
\begin{equation}\label{L1FHg1}
\|F_H \ast g_1\|_{L_1} \leq \int_{\T} \int_{\T} |F_H (\alpha - \beta)| |g_1(\beta)| d \beta d \alpha \leq \|g_1\|_{L_1} \|F_H\|_{L_1} \ll \|g_1\|_{L_1}.
\end{equation}
The last inequality immediately follows from (\ref{Flow}). Note that $a_n = \mu^2(n)$ when $r=2$ in (\ref{defan}). Thus, for Theorem \ref{sqfre} and the lower bound case of Theorem \ref{rfree}, it suffices to find a lower bound for $\|F_{H} \ast g_1\|_{L_1}$. The above idea can also be used to deduce that Theorem \ref{Mobius} impies Theorem \ref{sqfre}.

\begin{proof}[Proof that Theorem \ref{sqfre} implies Theorem \ref{Mobius}]
Define
$$
\mu_{H}(n) :=\mu(n)\1_{|n-N|<H}
$$
Then we have
$$
\widehat{\mu_H}(\alpha) = \sum_{|n-N|<H} \mu(n) e(n \alpha),
$$
and by (\ref{FourCov}) we have
$$
\widehat{\mu^2_{H}}(\alpha) = \widehat{\mu_{H}}*\widehat{\mu_{H}}(\alpha).
$$

Similar to (\ref{L1FHg1}), we have
$$
\|\widehat{\mu^2_{H}}\|_{L_1} = \|\widehat{\mu_{H}} * \widehat{\mu_{H}}\|_{L_1} \ll \|\widehat{\mu_{H}}\|_{L_1}^2.
$$
Now by Theorem \ref{sqfre}, $\|\widehat{\mu^2_{H}}\|_{L_1} \gg H^{1/3}$, and the claim follows.
\end{proof}

The rest of the paper is organized as follows. In Section \ref{auxiliary_lemmas}, we collect some auxiliary lemmas concerning the Riemann zeta function, Dirichlet polynomials, and van der Corput bounds. In Section \ref{sect_key}, we will prove a key lemma, which improves Lemma 1 of \cite{BR01}, using an analytic approach and van der Corput bounds. Then in Sections \ref{pflow} and \ref{pfupp}, we will follow Balog's and Ruzsa's arguments to prove our lower bound and upper bound results.

\begin{acknowledgment}
The author is grateful to his supervisor Kaisa Matom\"aki for many useful discussions, reading the paper carefully and giving a lot of helpful comments, and to Hao Pan and Joni Ter\"av\"ainen for drawing his attention to the van der Corput methods. During the work the author was supported by EDUFI funding.
\end{acknowledgment}

\section{Some auxiliary lemmata}\label{auxiliary_lemmas}
\setcounter{lemma}{0} \setcounter{theorem}{0}
\setcounter{equation}{0}

\begin{lemma}[Perron's Formula]\label{Perron}
For $\Re(s)> 1$, let
$$
f(s)= \sum_{n=1}^{\infty} a_n n^{-s},
$$
where $a_n = O(\phi(n))$, $\phi(n)$ is non-decreasing. Let $\alpha > 0$ and as $\sigma \to 1^{+}$,
$$
\sum_{n=1}^{\infty} a_n n^{-\sigma} = O\left( \frac{1}{(\sigma-1)^{\alpha}} \right).
$$
Then if $c>1$ and $x$ is not an integer, we have
\begin{align*}
\sum_{n \leq x}a_n = & \frac{1}{2 \pi i} \int_{c - iT}^{c + iT} f(s) \frac{x^s}{s} ds + O\left(\frac{x^c}{T(c-1)^{\alpha}}\right) \\
& + O\left(\frac{\phi(2x) x \log x}{T}\right) + O\left(\frac{\phi(N)x}{T\|x\|}\right).
\end{align*}
\end{lemma}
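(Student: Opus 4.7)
The plan is to derive the formula from the truncated discontinuous integral, the standard engine behind every effective Perron formula. The first step is to establish that for $y>0$ with $y\neq 1$ and $c>0$,
$$
\frac{1}{2\pi i}\int_{c-iT}^{c+iT}\frac{y^{s}}{s}\,ds \;=\; \delta(y) + R(y,T),
$$
where $\delta(y)=1$ if $y>1$ and $\delta(y)=0$ if $y<1$, with the error bound $R(y,T)\ll y^{c}\min\bigl(1,(T|\log y|)^{-1}\bigr)$. This identity is obtained by shifting the contour to a large rectangle: for $y>1$ one closes to the left, picking up the residue $1$ at $s=0$; for $y<1$ one closes to the right, where there is no pole. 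The top, bottom and far-vertical segments are estimated via the crude bound $|y^{s}/s|\le y^{\sigma}/|s|$.

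Given this, I would substitute $y=x/n$ in the identity, multiply by $a_n$, and sum over $n\ge 1$. Since $\Re(s)=c>1$ and $a_n=O(\phi(n))$, the Dirichlet series converges absolutely on the contour, so Fubini justifies interchanging sum and integral, producing
$$
\frac{1}{2\pi i}\int_{c-iT}^{c+iT} f(s)\frac{x^{s}}{s}\,ds \;=\; \sum_{n\le x} a_n \;+\; E, \qquad E := \sum_{n=1}^{\infty} a_n\, R(x/n,T).
$$
Thus the proof reduces to bounding $E$ by the sum of the three error terms claimed.

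To bound $E$ I would split the range of $n$ into three pieces. \emph{(i)} For $n$ with $|\log(x/n)|\ge \log 2$ (i.e.\ $n\le x/2$ or $n\ge 2x$), use $R\ll (x/n)^{c}/T$ and the hypothesis $\sum_{n} a_n n^{-c}\ll (c-1)^{-\alpha}$ to obtain the first error term $x^{c}/(T(c-1)^{\alpha})$. \emph{(ii)} For $n$ in the dyadic neighbourhood $x/2<n<2x$ but not among the integers closest to $x$, use the local expansion $|\log(x/n)|\gg |x-n|/x$ together with the crude bound $a_n\ll \phi(2x)$; the resulting harmonic-type sum $\sum_{0<|x-n|<x/2}|x-n|^{-1}$ contributes the $\phi(2x)x\log x/T$ term. \emph{(iii)} For the one or two integers nearest $x$, the distance to $x$ is $\|x\|$, hence $|\log(x/n)|\gg \|x\|/x$, and the bound $a_n\ll \phi(N)$ (where $N=\lfloor 2x\rfloor$, say) yields the last error term $\phi(N)x/(T\|x\|)$.

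The main obstacle is the careful bookkeeping in range \emph{(ii)}: one must use the sharp local estimate $|\log(x/n)|\gg |x-n|/x$ and extract exactly the $\log x$ factor from the harmonic sum without overlapping the nearest-integer contribution in \emph{(iii)}. Once this is settled, the remaining pieces are routine complex analysis and absolute-convergence estimates; the statement is essentially Titchmarsh's effective Perron formula, and no genuinely new idea is required.
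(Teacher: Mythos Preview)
Your proposal is correct and follows the classical derivation of the effective Perron formula via the truncated discontinuous integral; this is precisely the argument given in Titchmarsh's book. The paper itself does not supply a proof but simply cites \cite[Lemma 3.12]{T86}, so your sketch is in fact more detailed than what appears in the paper and matches the referenced source.
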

\begin{proof}
See \cite[Lemma 3.12]{T86}.
\end{proof}

\begin{lemma}\label{zeta_ubd}
Let  $\epsilon>0$. Suppose that $\frac{1}{2} \leq \sigma \leq 1 + \varepsilon $ and $t \geq 1$. Then
$$
\zeta(\sigma + it) = O(t^{\frac{1}{3}(1 - \sigma) + \epsilon}).
$$
\end{lemma}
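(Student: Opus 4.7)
The lemma is the classical convexity bound for $\zeta$ in the critical strip, so the natural plan is to interpolate between two endpoint estimates via the Phragmén--Lindelöf principle. On the right edge $\sigma = 1+\varepsilon$, absolute convergence of the Dirichlet series gives $\zeta(1+\varepsilon+it) = O(1)$, which is compatible with the target exponent $\tfrac{1}{3}(1-\sigma)+\varepsilon$ being non-negative there. On the critical line one needs the Hardy--Littlewood--Weyl bound $\zeta(\tfrac12+it) = O(t^{1/6+\varepsilon})$, which exactly matches $\tfrac{1}{3}(1-\sigma)+\varepsilon$ at $\sigma = 1/2$.

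The substantive part is the critical-line estimate. The standard route is to invoke the approximate functional equation (as in Titchmarsh \cite{T86}, Chapter~4) to reduce $\zeta(\tfrac12+it)$ to $O(\log t)$ dyadic exponential sums of the form $\sum_{M<n\le 2M} n^{-1/2} e\bigl(-\tfrac{t}{2\pi}\log n\bigr)$ with $M \ll t^{1/2}$, plus a lower-order error. Applying van der Corput's $B$-process once converts each dyadic sum into an exponential sum whose phase has third derivative of size $\asymp t/M^3$; van der Corput's third-derivative test then yields $M^{1/2}(t/M^3)^{1/6} \ll t^{1/6}$ for each dyadic piece, and summing over the $O(\log t)$ pieces gives $\zeta(\tfrac12+it) \ll t^{1/6+\varepsilon}$.

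Finally, I would apply the Phragmén--Lindelöf principle to $\zeta(s)$ on the strip $\tfrac12 \le \sigma \le 1+\varepsilon$, using that $\zeta$ is of finite order there. The linear function interpolating the boundary exponents $1/6$ at $\sigma = 1/2$ and $0$ at $\sigma = 1+\varepsilon$ is precisely $\varphi(\sigma) = \tfrac{1}{3}(1-\sigma)$, yielding $\zeta(\sigma+it) \ll t^{\varphi(\sigma)+\varepsilon}$ uniformly on the strip. The only genuine difficulty is the critical-line input, but since this is entirely classical one could equally well just quote it from Titchmarsh (Theorem~5.12 of \cite{T86}) and skip directly to the convexity step.
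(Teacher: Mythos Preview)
Your proposal is correct and is precisely the argument the paper invokes: the paper's proof is simply a pointer to Titchmarsh \cite{T86}, Chapter~5, citing the $t^{1/6}$ bound on the critical line (Theorem~5.5 there) together with the convexity of the Lindel\"of $\mu$-function, which is exactly the Phragm\'en--Lindel\"of interpolation you describe. One small bibliographic correction: the critical-line input in Titchmarsh is Theorem~5.5, not Theorem~5.12 (the latter is the Richert-type bound $O(t^{c(1-\sigma)^{3/2}}\log t)$, which is a different statement).
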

\begin{proof}
See \cite[Chapter 5, in particular, (5.12), Theorem 5.5 and the convexity of $\mu(\sigma)$.]{T86} 
\end{proof}

\begin{lemma}\label{mean_zeta}
For $T \geq 2$, we have
$$
\int_{-T}^{T}\left|\zeta\left(\frac{1}{2} + it\right)\right|^2 \ll T \log T.
$$
\end{lemma}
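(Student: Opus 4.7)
The bound in Lemma 2.3 is the classical Hardy--Littlewood second moment of the Riemann zeta function on the critical line. The plan is to deduce it from the approximate functional equation combined with the mean value theorem for Dirichlet polynomials.

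First, observe that $\zeta(\overline{s}) = \overline{\zeta(s)}$, so it suffices to bound $\int_{2}^{T}|\zeta(1/2+it)|^2\,dt$; the range $|t|\le 2$ contributes $O(1)$ since $\zeta$ is bounded there away from the pole at $s=1$. I would then invoke the approximate functional equation in its simplest symmetric form: for $t \ge 2$,
$$
\zeta(1/2+it) = \sum_{n \le \sqrt{t/(2\pi)}} n^{-1/2-it} + \chi(1/2+it)\sum_{n \le \sqrt{t/(2\pi)}} n^{-1/2+it} + O(t^{-1/4}),
$$
where $|\chi(1/2+it)| = 1$ (this is Theorem 4.11 of Titchmarsh).

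Second, I would split $[2,T]$ dyadically into intervals $I_k = [2^k, 2^{k+1}]$, so that on $I_k$ the length $N_k := \sqrt{2^k/(2\pi)}$ is essentially constant. On each $I_k$, the mean value theorem for Dirichlet polynomials,
$$
\int_{0}^{U}\Bigl|\sum_{n \le N} a_n n^{-it}\Bigr|^2 dt = \sum_{n \le N} |a_n|^2 \bigl(U + O(n)\bigr),
$$
applied with $a_n = n^{-1/2}$, gives
$$
\int_{I_k}\Bigl|\sum_{n \le N_k} n^{-1/2-it}\Bigr|^2 dt \ll \sum_{n \le N_k}\frac{1}{n}\bigl(2^k + O(n)\bigr) \ll 2^k \log(2^k) + N_k \ll 2^k \log(2^k).
$$
The analogous bound holds for the twisted sum (since $|\chi(1/2+it)| = 1$), while the $O(t^{-1/4})$ error contributes $O(2^{k/2})$ after squaring and integrating. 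Summing over $2 \le 2^k \le T$ yields the claimed $O(T \log T)$.

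The main obstacle is purely technical: ensuring that the approximate functional equation and the mean value theorem are applied uniformly in $t$, which is what forces the dyadic decomposition. Since this is a very standard result, an alternative (and shorter) route is simply to quote Theorem~7.2 of Titchmarsh \cite{T86}, where the stronger asymptotic $\int_0^T |\zeta(1/2+it)|^2\,dt \sim T\log T$ is established.
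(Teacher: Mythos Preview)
Your proposal is correct; the paper itself does not give an argument but simply cites Theorem~7.2(A) of Titchmarsh~\cite{T86}, which is exactly the reference you mention at the end as the shorter alternative route. Your sketch via the approximate functional equation and the mean value theorem for Dirichlet polynomials is the standard way this result is proved in Titchmarsh, so in effect you have unpacked the citation rather than taken a different approach.
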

\begin{proof}
See \cite[Theorem 7.2(A)]{T86}.
\end{proof}

\begin{lemma}\label{mean_Dpoly}
For $T, N \geq 2$ and any complex numbers $a_n$ we have
$$
\int_{0}^{T} \left|\sum_{0 \leq n \leq N} a_n n^{it}\right|^2 dt = (T + O(N))\sum_{0 \leq n \leq N}|a_n|^2
$$
\end{lemma}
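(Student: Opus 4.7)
The plan is to expand $\bigl|\sum_n a_n n^{it}\bigr|^2$ directly, extract the diagonal contribution (which produces the main term $T \sum_n |a_n|^2$), and then bound the off-diagonal part by means of the Montgomery--Vaughan refinement of Hilbert's inequality. The $n=0$ summand, if present, is peeled off as an isolated constant term contributing $T|a_0|^2$ to the main term and at most $O(N)|a_0|^2$ to the error after pairing with the $N$ remaining indices.

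First I would expand
\[
\left|\sum_{n} a_n n^{it}\right|^2 = \sum_{m,n} a_m \bar a_n (m/n)^{it}
\]
and integrate over $t \in [0,T]$. The diagonal terms $m=n$ give exactly $T\sum_n |a_n|^2$. For $m \ne n$, one has $\int_0^T (m/n)^{it}\,dt = ((m/n)^{iT}-1)/(i\log(m/n))$. Using $(m/n)^{iT} = m^{iT}\,\overline{n^{iT}}$, the off-diagonal contribution splits as the difference of two sums
\[
\frac{1}{i}\sum_{m\ne n}\frac{(a_m m^{iT})\,\overline{(a_n n^{iT})}}{\log(m/n)} \;-\; \frac{1}{i}\sum_{m\ne n}\frac{a_m \bar a_n}{\log(m/n)},
\]
both of the form $\sum_{m\ne n} x_m \bar x_n / (\lambda_m - \lambda_n)$ with $\lambda_n = \log n$.

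Next I would apply the Montgomery--Vaughan inequality
\[
\left|\sum_{m\ne n}\frac{x_m \bar x_n}{\lambda_m - \lambda_n}\right| \le \frac{3\pi}{2}\sum_n \frac{|x_n|^2}{\delta_n}, \qquad \delta_n := \min_{m\ne n}|\lambda_m - \lambda_n|,
\]
to the nodes $\lambda_n = \log n$. Since the nearest neighbour satisfies $\delta_n = \log(1 + 1/n) \asymp 1/n$, one gets $\delta_n^{-1} \ll n \le N$, and each of the two sums above is bounded by $\ll N \sum_n |a_n|^2$. Combining this with the diagonal contribution yields the claimed identity.

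The only real obstacle is sharpness of the error. The pointwise estimate $|(m/n)^{iT} - 1|/|\log(m/n)| \le 2/|\log(m/n)|$ combined with the crude bound $\sum_{m \ne n, m \le N} 1/|\log(m/n)| \ll n \log n$ only gives an error of size $N (\log N) \sum_n |a_n|^2$, losing a logarithm. Removing this log factor is precisely what the bilinear Montgomery--Vaughan inequality accomplishes, exploiting cancellation in the off-diagonal sum that no pointwise kernel bound can see. Once that input is granted the argument is entirely routine, and indeed one may simply appeal to \cite[Theorem~7.3]{T86}.
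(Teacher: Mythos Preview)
Your argument is correct and is precisely the standard Montgomery--Vaughan proof of the mean value theorem for Dirichlet polynomials. The paper does not actually prove this lemma; it simply cites \cite[Theorem~9.1]{IK04}, whose proof is the same expansion-plus-Hilbert-inequality argument you outline, so there is no substantive difference to report.
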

\begin{proof}
See \cite[Theorem 9.1]{IK04}.
\end{proof}

\begin{lemma}[van der Corput bounds]\label{VDinequality}
Suppose that $(p,q)$ is an exponent pair defined by \cite[(8.57) and (8.58)]{IK04}, $f(x)$ behaves like a monomial so that
$$
|f^{(j)}(x)| \asymp_{j} FM^{-j}
$$
for every $j \geq 0$, any $x \in [M,2M]$ and some $F \geq M$. We define $\psi(x):= \{x\} -\frac{1}{2}$ and let $I = [a,b] \subset [M,2M]$. Then
$$
\sum_{m \in I}\psi(f(m)) \ll F^{\frac{p}{p+1}}M^{\frac{1+2q}{2(p+1)}+\epsilon}
$$
for any $\epsilon > 0$.
\end{lemma}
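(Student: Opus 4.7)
The plan is to follow the standard van der Corput–Vaaler method: replace $\psi$ by its truncated Fourier series, apply the exponent pair bound to each of the resulting exponential sums, and optimise the truncation parameter.

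I would first fix an integer $J \geq 1$ and invoke Vaaler's polynomial approximation to $\psi$: there exist coefficients $c_h$ with $|c_h| \leq 1/(2\pi|h|)$, and a non-negative Fej\'er-type trigonometric polynomial $R_J$ of degree $\leq J$ whose Fourier coefficients are $O(1/J)$, such that
\begin{equation*}
\psi(x) = \sum_{1 \leq |h| \leq J} c_h\, e(hx) + O\bigl(R_J(x)\bigr).
\end{equation*}
Substituting $x = f(m)$, summing over $m \in I$, and expanding $R_J$ in its Fourier series produces
\begin{equation*}
\sum_{m \in I} \psi(f(m)) \ll \frac{M}{J} + \sum_{1 \leq h \leq J} \frac{|S_h|}{h}, \qquad S_h := \sum_{m \in I} e(h f(m)),
\end{equation*}
where $M/J$ comes from the $h = 0$ Fourier coefficient of $R_J$ and the $O(1/J)$ bound on the nonzero coefficients of $R_J$ is dominated by $1/h$ for $h \leq J$.

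Next I would apply the exponent pair $(p,q)$ to each $S_h$. Since $|(hf)^{(j)}(x)| \asymp hF \cdot M^{-j}$ on $[M,2M]$ with constants uniform in $h$, and $hF \geq F \geq M$ keeps us in the admissible range of the exponent pair, we obtain a bound of the shape $|S_h| \ll (hF)^p M^{q}$ in the normalisation of \cite{IK04}. Summing over $h \leq J$ using $\sum_{h \leq J} h^{p-1} \ll J^p$ (up to logarithmic losses when $p$ is small) yields
\begin{equation*}
\sum_{m \in I} \psi(f(m)) \ll_\epsilon \frac{M}{J} + F^p M^{q} J^p M^{\epsilon}.
\end{equation*}
Choosing $J$ as an appropriate positive power of $M/F$ to balance the two contributions then produces the claimed exponents $F^{p/(p+1)} M^{(1+2q)/(2(p+1))+\epsilon}$.

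The main obstacle I anticipate is not conceptual but bookkeeping: verifying the uniformity of the implicit constants in the derivative condition $|(hf)^{(j)}| \asymp (hF)M^{-j}$ as $h$ varies over $[1,J]$, and handling via a dyadic decomposition the small-$h$ regime in which the exponent pair bound is weaker than the trivial bound $|S_h| \leq |I|$. These splits, together with the sum $\sum_{h \leq J} h^{p-1}$, produce the logarithmic losses that are absorbed into the $\epsilon$ in the final exponent, and the numerical exponent on $M$ then emerges as the outcome of the single-variable optimisation of $J$.
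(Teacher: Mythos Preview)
Your approach is exactly the paper's: truncate $\psi$ by its Fourier expansion (the paper cites Graham--Kolesnik rather than Vaaler, but the outcome is identical), apply the exponent pair to each $S_h$, and optimise the cutoff. So conceptually there is nothing to add.

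There is, however, a slip in the form of the exponent pair bound you quote. In the normalisation of \cite[(8.58)]{IK04} the estimate is
\[
|S_h| \;=\; \Bigl|\sum_{m\in I} e(hf(m))\Bigr| \;\ll\; \bigl(hFM^{-1}\bigr)^{p}\,M^{\,q+\frac12+\epsilon'},
\]
not $(hF)^p M^{q}$ as you wrote. With your stated form the optimisation of $J$ produces the exponent $M^{(p+q)/(p+1)}$ on $M$, which only matches the claimed $M^{(1+2q)/(2(p+1))}$ when $p=\tfrac12$. With the correct bound one gets
\[
\sum_{m\in I}\psi(f(m)) \;\ll\; \frac{M}{J} \;+\; F^{p}M^{\,q-p+\frac12+\epsilon'}J^{p},
\]
and balancing via $J^{p+1}=M^{\,p-q+\frac12}F^{-p}$ yields exactly $F^{p/(p+1)}M^{(1+2q)/(2(p+1))+\epsilon}$. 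Once this is corrected, your proof and the paper's coincide line for line.
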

\begin{proof}
By \cite[Theorem A.6]{GK91}, there exist coefficients $\beta(r) \ll |r|^{-1}$ such that
$$
\psi(x) \leq R^{-1} + \sum_{1 \leq |r| \leq R} \beta(r)e(rx).
$$
Thus,
$$
\sum_{m \in I}\psi(f(m)) \ll MR^{-1} + \sum_{1 \leq r \leq R}\frac{1}{r} \left|\sum_{m \in I}e(rf(m))\right|.
$$
By \cite[(8.58)]{IK04}, we obtain that
\begin{align*}
\sum_{m \in I}\psi(f(m))& \ll MR^{-1} + \sum_{1 \leq r \leq R}\frac{1}{r} (rFM^{-1})^{p}M^{q+\frac{1}{2}+\epsilon'}\\
& \ll MR^{-1} + F^{p}M^{q-p+\frac{1}{2}+\epsilon'}R^p.
\end{align*}
We choose $R$ so that $R^{p+1} = M^{\frac{1}{2} + p - q - \epsilon'}F^{-p}$, and the claim follows.
\end{proof}

\begin{Rem}
We will only need the lemma for the exponent pair $(p,q) = \left(\frac{2}{7},\frac{1}{14}\right)$
\end{Rem}

\section{Key lemmas}\label{sect_key}
\setcounter{lemma}{0} \setcounter{theorem}{0}
\setcounter{equation}{0}

We recall that $a_n = \sum_{d^r \mid n} \mu(d)$. Let $ z > y \geq 1$ be any real numbers. For $n \neq 0$ we define $c_{n} = c_{n}(y,z)$ as a middle part of the above sum, namely
\begin{equation}\label{cnyz}
c_{n} = \sum_{\substack{d^r \mid n \\ y < d \leq z}} \mu(d).
\end{equation}
\begin{lemma}\label{key}
For any $1 \leq K < N$ and $1 \leq y < z$ we have
\begin{equation}\label{r>2}
\sum_{N-K < n \leq N}|c_n|^2 \ll K y^{1-r} + N^{1/r + \epsilon}.
\end{equation}
Furthermore, for $r=2$ and $y \leq  K^{1/2 - \epsilon}$, we have
\begin{equation}\label{r=2}
\sum_{N-K < n \leq N}|c_n|^2 \ll K y^{-1} + N^{\frac{12}{29} + \epsilon}y^{-\frac{10}{29}}.
\end{equation}
\end{lemma}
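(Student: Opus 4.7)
The plan is to expand $c_n^2$ as a double M\"obius sum and separate a main term from a fractional-part error. Since the count of $n\in(N-K,N]$ divisible by $[d_1,d_2]^r$ equals $K/[d_1,d_2]^r+\delta([d_1,d_2])$ with $\delta(\ell):=\psi((N-K)/\ell^r)-\psi(N/\ell^r)$, this yields
\[
\sum_{N-K<n\leq N}c_n^2 \;=\; K\,F(1)\;+\;E,
\]
where $F(s):=\sum_{d_1,d_2\in(y,z]}\mu(d_1)\mu(d_2)/[d_1,d_2]^{rs}$ and $E:=\sum_{d_1,d_2\in(y,z]}\mu(d_1)\mu(d_2)\,\delta([d_1,d_2])$. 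For the main-term factor $F(1)$, I would substitute $d_i=ga_i$ with $g=(d_1,d_2)$ and $(a_1,a_2)=1$, split into the cases $g\leq y$ and $g>y$, and use $\sum_{a>y/g}a^{-r}\ll (y/g)^{1-r}$ in each to conclude $F(1)\ll y^{1-r}$, giving the main-term contribution $\ll Ky^{1-r}$.

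For the error $E$, my approach to (\ref{r>2}) is through Perron's formula (Lemma \ref{Perron}) applied to the Dirichlet series $\sum_n c_n^2 n^{-s}=\zeta(s)F(s)$. Writing the short-interval sum as $\sum_{n\leq N}-\sum_{n\leq N-K}$ and shifting the contour across the simple pole at $s=1$ down to the line $\Re(s)=1/r+\epsilon$, the residue reproduces $KF(1)$ while the remaining integral carries a factor $(N^s-(N-K)^s)/s$ bounded by $KN^{\Re(s)-1}$. The same splitting argument as in the main term gives $|F(\sigma+it)|\ll y^{1-r\sigma}$ in the region of convergence $\sigma>1/r$, and combining this with Lemma \ref{zeta_ubd} for $\zeta$ (together with Lemma \ref{mean_zeta} via Cauchy--Schwarz for the $t$-integral), then balancing against the Perron truncation error $\ll N^{1+\epsilon}/T$ and choosing $T$ optimally, should deliver the total error $\ll N^{1/r+\epsilon}$, establishing (\ref{r>2}).

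For the sharper bound (\ref{r=2}) in the case $r=2$, I would work with $E$ directly and invoke van der Corput bounds (Lemma \ref{VDinequality}) in place of the convexity bound on $\zeta$. Grouping pairs by $\ell=[d_1,d_2]$ gives $E=\sum_\ell w(\ell)\delta(\ell)$ with a weight $|w(\ell)|\leq 3^{\omega(\ell)}$, and then after a dyadic decomposition $\ell\sim L$ and the expansion $\tau_3(\ell)=\sum_{abc=\ell}1$, I would apply Lemma \ref{VDinequality} with $(p,q)=(2/7,1/14)$ to the innermost sum $\sum_c\psi(N/(abc)^2)$. The resulting van der Corput contribution, of the form $(N/L^2)^{2/9}(L/(ab))^{4/9+\epsilon}$, would be balanced against the trivial bound $L/(ab)$, then summed over $a,b$ and over the dyadic scales $L\in(y,z^2]$. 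The hypothesis $y\leq K^{1/2-\epsilon}$ is used to govern the crossover between the van der Corput and trivial regimes and should yield the claimed $\ll N^{12/29+\epsilon}y^{-10/29}$.

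The main obstacle is the final multi-parameter optimization for (\ref{r=2}): simultaneously balancing the van der Corput estimate against the trivial bound across $(a,b,L)$, tracking the $y$-dependence that emerges from the constraint $y<d_1,d_2\leq z$ transferred onto the factorizations of $\ell$, and checking that the curvature condition $F\geq M$ of Lemma \ref{VDinequality} holds on the relevant dyadic scales. The specific exponents $12/29$ and $10/29$ emerge from this optimization tuned to the exponent pair $(2/7,1/14)$.
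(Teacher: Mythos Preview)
Your plan for \eqref{r=2} has a genuine gap: the exponents $\tfrac{12}{29}$ and $-\tfrac{10}{29}$ do not come from van der Corput alone but from balancing van der Corput against an \emph{analytic} estimate, and you have discarded the latter. The paper first extracts the gcd $h=(d_1,d_2)$, writes $n=h^2(d_1')^2(d_2')^2a$ with $(d_1',d_2')=1$, and localises $d_i'\sim D_i$, $h\sim H_0$ (so $H_0D_1\gg y$). For $D_2\le D$ it applies the hyperbolic method in the variables $(h,a)$ together with Lemma~\ref{VDinequality} at $(p,q)=(\tfrac{2}{7},\tfrac{1}{14})$, obtaining a contribution $\ll N^{2/9+\epsilon}D^{10/9}$; for $D_2>D$ it uses Perron, shifts to $\Re(s)=\tfrac12$, and bounds the integral via Cauchy--Schwarz with Lemmas~\ref{mean_zeta} and~\ref{mean_Dpoly}, obtaining $\ll N^{1/2+\epsilon}y^{-1/2}D^{-1/2}$ (the $y^{-1/2}$ coming directly from $H_0D_1\gg y$). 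Equating these at $D=N^{5/29}y^{-9/29}$ yields \eqref{r=2}. The paper explicitly remarks that the hyperbolic trick plus van der Corput by themselves give no improvement over $N^{1/2}$: the term $N^{2/9}D^{10/9}$ blows up as $D\to z=N^{1/2}$. In your scheme, once you bound $|w(\ell)|\le 3^{\omega(\ell)}$ and open it via $\tau_3$, the constraint $d_i>y$ disappears, so no $y$-saving can emerge; and for large $\ell$ (up to $z^2=N$) the $c$-sum is too short for Lemma~\ref{VDinequality}, so the ``trivial'' branch of your balance dominates and leaves an error far larger than $N^{12/29}$.

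A smaller issue with \eqref{r>2}: for $r\ge 3$ your line $\Re(s)=\tfrac1r+\epsilon$ lies left of $\tfrac12$, where neither Lemma~\ref{zeta_ubd} nor Lemma~\ref{mean_zeta} applies. The paper does not argue \eqref{r>2} analytically at all; it simply cites \cite[Lemma~1]{BR01}, whose proof is elementary---bound $\sum|c_n|^2\le\sum_{y<d_1,d_2\le z}(K/[d_1,d_2]^r+O(1))$ and estimate the two pieces directly.
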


\begin{proof}
 In fact (\ref{r>2}) is  \cite[Lemma 1]{BR01}, so we just need to prove (\ref{r=2}). For $r=2$, we have
\begin{align}
& \sum_{N-K < n \leq N}|c_n|^2 \leq \sum_{N-K < n \leq N} (\sum_{\substack{d^2 \mid n \\ y < d \leq z}} 1)^2 = \sum_{N-K < n \leq N} \sum_{\substack{d_1^2, d_2^2 \mid n \\ y < d_i\leq z}}1 \notag\\
\notag \ll & \sum_{N-K < n \leq N} \sum_{h^2 \mid n} \sum_{\substack{(hd_1')^2,(hd_2')^2 \mid n \\ (d_1',d_2')=1 \\ y < hd'_i \leq z}}1\\
\label{ubdcn} \ll & \sum_{N-K < n \leq N} \sum_{\substack{n = h^2 d_1^2 d_2^2 a \\ y < hd_i \leq z}}1
\end{align}

 So we only need to show that (\ref{ubdcn}) is $\ll K y^{-1} + N^{\frac{12}{29} + \epsilon}y^{-\frac{10}{29}}$. We first split $d_i, h$ into dyadic ranges, so we consider the sum over $d_i \sim D_i$ and $h \sim H$ with $y \ll HD_i \ll z$ and then add those sums later.

 Before proving the upper bound for (\ref{ubdcn}), let us explain how we use two different techniques depending on the size of $D_i$ and $H$. Without loss of generality, we can assume that $D_2 \gg D_1$ and also notice that $D_1H \gg y$. If $D_2$ is large, we can use an analytic approach to give a good upper bound (see (\ref{uppAnalytic})). If $D_2$ is small, we can use the hyperbolic trick and the van der Corput method.

For convenience we can assume that neither $N$ nor $N-K$ is an integer (e.g. using $\lfloor N \rfloor + \frac{1}{2}$ and $\lfloor N-K \rfloor + \frac{1}{2}$ to replace $N$ and $N-K$.). By Lemma \ref{Perron}, with $T = T_0 \asymp N$,
\begin{align*}
& \sum_{\substack{N-K < h^2 d_1^2 d_2^2 a \leq N \\ d_i \sim D_i, h \sim H}} 1 \\ = & \frac{1}{2 \pi i} \int_{1 + \varepsilon - iT_0}^{1+ \varepsilon + iT_0} \frac{N^s - (N-K)^s}{s} \zeta(s)P(2s) ds + O(N^{\epsilon}),
\end{align*}
where
$$
P(s) = \left(\sum_{d_1 \sim D_1} \frac{1}{d_1^{s}}\right)\left(\sum_{d_2 \sim D_2} \frac{1}{d_2^{s}}\right) \left(\sum_{h \sim H} \frac{1}{h^{s}}\right).
$$
We move the line of integration to $\Re(s) = \frac{1}{2}$, The residue of
$$
\frac{N^s - (N-K)^s}{s} \zeta(s)P(2s)
$$
at $s=1$ is $O(K (H D_1 D_2)^{-1})$.

By Lemma \ref{zeta_ubd},
\begin{align*}
& \frac{1}{2 \pi i} \int_{\frac{1}{2}+ iT_0}^{1 + \varepsilon + iT_0} \frac{N^s - (N-K)^s}{s} \zeta(s)P(2s) ds\\
\ll & \max_{\frac{1}{2} \leq \sigma \leq 1 + \varepsilon}\frac{N^{\sigma}}{T_0} T_0^{\frac{1}{3}(1-\sigma)+\varepsilon} (H D_1 D_2)^{1 - 2\sigma} \ll N^{\epsilon}.
\end{align*}
Similarly,
$$
\frac{1}{2 \pi i} \int_{\frac{1}{2}- iT_0}^{1 + \varepsilon - iT_0} \frac{N^s - (N-K)^s}{s} \zeta(s)P(2s) ds \ll N^{ \epsilon}
$$
Hence the remaining task is to estimate the integral when $\Re(s)=\frac{1}{2}$. We have
\begin{align}
& \frac{1}{2 \pi i} \int_{\frac{1}{2}- iT_0}^{\frac{1}{2} + iT_0} \frac{N^s - (N-K)^s}{s} \zeta(s)P(2s) ds \notag\\
= & \frac{1}{2 \pi i} \int_{-T_0}^{T_0} \frac{N^{\frac{1}{2}+it} - (N-K)^{\frac{1}{2}+it}}{\frac{1}{2} + it} \zeta\left(\frac{1}{2} + it\right)P(1 + 2it) dt \notag \\
\ll & \int_{-T_0}^{T_0} \min\left\{KN^{-\frac{1}{2}}, \frac{N^{\frac{1}{2}}}{|t|}\right\} \left|\zeta\left(\frac{1}{2} + it\right)P(1 + 2it)\right| dt \notag\\
\ll & N^{\frac{1}{2}} \left( \int_{|t| \leq N/K} \frac{K}{N} \left|\zeta\left(\frac{1}{2} + it\right)P(1 + 2it) \right| dt + \int_{N \geq |t|> N/K} \frac{1}{|t|}\left|\zeta\left(\frac{1}{2} + it\right)P(1 + 2it)\right| dt \right) \notag \label{dyadicT}\\
\end{align}
Using a dyadic trick to deal with the second term in the bracket, we have
\begin{align*}
& \int_{|t|> N/K} \frac{1}{|t|}\left|\zeta\left(\frac{1}{2} + it\right)P(1 + 2it)\right| dt \ll \int_{N \gg |t|> N/K} \frac{1}{|t|}\left|\zeta\left(\frac{1}{2} + it\right)P(1 + 2it)\right| \frac{1}{|t|}\int_{|t|/2}^{|t|} 1 d T dt\\
\ll & \int_{N \gg T \gg N/K} \frac{1}{T^2} \int_{T}^{2T} \left|\zeta\left(\frac{1}{2} + it\right)P(1 + 2it)\right| dt dT.
\end{align*}
Hence (\ref{dyadicT}) is at most
\begin{equation}\label{dyadicTsum}
\ll N^{\frac{1}{2}} \log N \sup_{N/K \ll T \ll N} \frac{1}{T} \int_{|t| \leq T} \left|\zeta\left(\frac{1}{2} + it\right)P(1 + 2it)\right|dt.
\end{equation}
By Cauchy-Schwarz inequality and Lemmas \ref{mean_zeta} and \ref{mean_Dpoly}, we deduce that (\ref{dyadicTsum}) is
\begin{align}
\ll & N^{\frac{1}{2}}\log N \sup_{N/K \ll T \ll N} \frac{1}{T} \left(\int_{|t|\leq T} \left|\zeta(\frac{1}{2} + it)\right|^2 dt\right)^{\frac{1}{2}}\left(\int_{|t|\leq T} |P(1 + 2 it)|^2 dt\right)^{\frac{1}{2}}\notag \\
\ll & N^{\frac{1}{2}}\log N \sup_{N/K \ll T \ll N} \frac{1}{T} (T \log T)^{\frac{1}{2}} ((T+O(D_1D_2H))^{\frac{1}{2}}\left(\sum_{D_1D_2H \ll n \ll D_1D_2H}\frac{a_0u^2_3(n)}{n^2}\right)^{\frac{1}{2}}\notag \\
\ll & N^{\frac{1}{2}} (D_1 D_2 H)^{-\frac{1}{2}} \log^{C_2} N +  K^{\frac{1}{2}} \log^{C_1}N \label{uppAnalytic},
\end{align}
where $a_0u_3(n) = \sum_{n=n_1n_2n_3}1$ and $C_1,C_2>0$ are absolute constants. Hence, we have
\begin{align}
& \sum_{\substack{N-K < h^2 d_1^2 d_2^2 a \leq N \\ d_i \sim D_i, h \sim H}} 1 \notag \\
\ll & K(HD_1D_2)^{-1} + K^{\frac{1}{2}} \log^{C_1}N + N^{\frac{1}{2}} (D_1 D_2 H)^{-\frac{1}{2}} + N^{\epsilon} \log^{C_2} N + N^{\epsilon} \notag \\
\ll & K(HD_1D_2)^{-1} + Ky^{-1-\epsilon} + N^{\frac{1}{2}}y^{-\frac{1}{2}}D_2^{-\frac{1}{2}}\log^{C_2} N + N^{\epsilon} \label{dyadicUpper},
\end{align}
since $y \leq K^{\frac{1}{2} - \epsilon}$ and $D_i H \gg y$.

On the other hand, (\ref{ubdcn}) is no more than
\begin{equation}\label{TransHyperbolic}
\sum_{d_1, d_2} \sum_{N-K <n \leq N} \sum_{\substack{n = h^2 d_1^2 d_2^2 a \\ y < hd_i \leq z}} 1 \leq \sum_{d_1, d_2} \sum_{\substack{\frac{N-K}{d_1^2 d_2^2} < h^2 a \leq \frac{N}{d_1^2 d_2^2} \\ \frac{y}{\min\{d_1,d_2\}} < h \leq \frac{z}{\max\{d_1,d_2\}} }}1
\end{equation}
Without loss of generality, we can assume that $d_1 \leq d_2$, and then by the hyperbolic trick, the inner sum on the right hand side of (\ref{TransHyperbolic}) is at most
\begin{align*}
& \sum_{\frac{y}{d_1} \leq h \leq (\frac{N}{d_1^2 d_2^2})^{\frac{1}{3}} } \sum_{\frac{N-K}{h^2 d_1^2 d_2^2} \leq a \leq \frac{N}{h^2 d_1^2 d_2^2}}1
+
\sum_{a \leq (\frac{N}{d_1^2 d_2^2})^{\frac{1}{3}} } \sum_{(\frac{N-K}{a d_1^2 d_2^2})^{\frac{1}{2}} \leq h \leq (\frac{N}{a d_1^2 d_2^2})^{\frac{1}{2}}}1 \\
:= & \Sigma_1 + \Sigma_2
\end{align*}
We use the standard van der Corput method to handle $\Sigma_1$ and $\Sigma_2$ respectively. Recalling the definition of $\psi(x) = \{x\} - \frac{1}{2}$ in Lemma \ref{VDinequality}, we have
\begin{align}
\Sigma_1 & = \sum_{\frac{y}{d_1} \leq h \leq (\frac{N}{d_1^2 d_2^2})^{\frac{1}{3}} } \frac{K}{h^2 d_1^2 d_2^2} + \psi\left(\frac{N-K}{h^2 d_1^2 d_2^2}\right) - \psi\left(\frac{N}{h^2 d_1^2 d_2^2}\right) \notag \\
& \ll K y^{-1} \frac{1}{d_1 d_2^2} + O\left(\left|\sum_{\frac{y}{d_1} \leq h \leq (\frac{N}{d_1^2 d_2^2})^{\frac{1}{3}}} \psi\left(\frac{N}{h^2 d_1^2 d_2^2}\right)\right| + \left|\sum_{\frac{y}{d_1} \leq h \leq (\frac{N}{d_1^2 d_2^2})^{\frac{1}{3}}}\psi\left(\frac{N-K}{h^2 d_1^2 d_2^2}\right)\right|\right), \label{psi}
\end{align}

Let $(p,q)$ be an exponent pair (see Lemma \ref{VDinequality}) satisfying $1+2q-4p \geq 0$. By using dyadic trick and Lemma \ref{VDinequality} with $F=\frac{N}{M^2 d_1^2 d_2^2}$, we have
\begin{align*}
&\sum_{\frac{y}{d_1} \leq h \leq (\frac{N}{d_1^2 d_2^2})^{\frac{1}{3}}} \psi\left(\frac{N}{h^2 d_1^2 d_2^2}\right)  \ll \log N \max_{\frac{y}{d_1} \leq M \leq M_1 \leq 2M \leq 2(\frac{N}{d_1^2d_2^2})^{\frac{1}{3}}} \left|\sum_{h \in (M,M_1]}\psi\left(\frac{N}{h^2 d_1^2 d_2^2}\right)\right| \\
\ll & \log N \max_{\frac{y}{d_1} \leq M \leq M_1 \leq 2M \leq 2(\frac{N}{d_1^2d_2^2})^{\frac{1}{3}}}  \left(\frac{N}{d_1^2d_2^2}\right)^{\frac{p}{p+1} + \epsilon}M^{\frac{1+2q-4p}{2(p+1)}} \ll \left(\frac{N}{d_1^2d_2^2}\right)^{\frac{1+2q+2p}{6(p+1)} + \epsilon},
\end{align*}
Thus 
\begin{equation}\label{uppSigma1}
\Sigma_1 \ll Ky^{-1} \frac{1}{d_1 d_2^2} + \left(\frac{N}{d_1^2 d_2^2}\right)^{\frac{2(p+q)+1}{6(p+1)} + \epsilon}.
\end{equation}
Similarly, for all exponent pairs $\left(p,q\right)$, we have
\begin{align}
\notag & \Sigma_2 =  \sum_{a \leq (\frac{N}{d_1^2 d_2^2})^{\frac{1}{3}} } \left(\frac{N}{a d_1^2 d_2^2}\right)^{\frac{1}{2}} - \left(\frac{N-K}{a d_1^2 d_2^2}\right)^{\frac{1}{2}} +   \psi\left(\left(\frac{N-K}{a d_1^2 d_2^2}\right)^{\frac{1}{2}}\right) - \psi\left(\left(\frac{N}{a d_1^2 d_2^2}\right)^{\frac{1}{2}}\right)\\
\label{UppSigma2} \ll &  K N^{- \frac{1}{3}} \frac{1}{(d_1 d_2)^{\frac{4}{3}}} + O\left(\left|\sum_{a \leq (\frac{N}{d_1^2 d_2^2})^{\frac{1}{3}}} \psi\left(\left(\frac{N}{a d_1^2 d_2^2}\right)^{\frac{1}{2}}\right)\right| + \left|\sum_{a \leq (\frac{N}{d_1^2 d_2^2})^{\frac{1}{3}}}\psi\left(\left(\frac{N-K}{a d_1^2 d_2^2}\right)^{\frac{1}{2}}\right)\right|\right)
\end{align}
As above, we split $a$ to dyadic ranges $a \sim A$ and use Lemma \ref{VDinequality} with $M = A$ and $F = (\frac{N}{A d_1^2 d_2^2})^{1/2}$. We obtain
\begin{equation}\label{uppSigma2}
\Sigma_2 \ll K N^{- \frac{1}{3}} \frac{1}{(d_1 d_2)^{\frac{4}{3}}} + \left(\frac{N}{d_1^2 d_2^2}\right)^{\frac{2(p+q)+1}{6(p+1)} + \epsilon} 
\end{equation}
Let $\sideset{}{^\flat}{\sum}$ represent the sum over powers of two and let $D$ be a parameter to be chosen later.
Hence by (\ref{dyadicUpper}), (\ref{TransHyperbolic}), (\ref{uppSigma1}) and (\ref{uppSigma2}) we get the following upper bound for (\ref{ubdcn})
\begin{align}
& \sum_{\substack{N-K \leq h^2 d_1^2 d_2^2 a \leq N \\ y \leq hd_i \leq z} } 1 \ll \sideset{}{^\flat}{\sum}_{D_2 \leq D} \sum_{\substack{N-K \leq h^2 d_1^2 d_2^2 a \leq N \\ y \leq hd_i \leq z \\ d_2 \sim D_2 \\ d_1 \leq d_2}} 1 +  \sideset{}{^\flat}{\sum}_{D_2 > D} \sum_{\substack{N-K \leq h^2 d_1^2 d_2^2 a \leq N \\ y \leq hd_i \leq z \\ d_2 \sim D_2 \\ d_1 \leq d_2}} 1 \notag\\
& \ll \sideset{}{^\flat}{\sum}_{D_2 \leq D }  \sum_{\substack{d_2 \sim D_2 \\ d_1 \leq d_2}} \sum_{\substack{\frac{N-K}{d_1^2 d_2^2} \leq h^2 a \leq \frac{N}{d_1^2 d_2^2} \\ \frac{y}{d_1} \leq h \leq \frac{z}{d_2} }}1 + \sideset{}{^\flat}{\sum}_{\substack{D_2 \gg D \\ D_1 \ll D_2 \\ y \ll HD_i \ll z }} \left(K(HD_1D_2)^{-1} + K y^{-1-\epsilon} + N^{\frac{1}{2} + \epsilon}y^{-\frac{1}{2}} D_2^{-\frac{1}{2}} + N^{\epsilon}\right) \notag\\
\label{uppVander}& \ll   Ky^{-1} + Ky^{-1-\epsilon}\log^2 N + N^{\frac{2(p+q)+1}{6(p+1)}+\epsilon}D^{2-\frac{4p+4q+2}{3(p+1)}} + N^{\frac{1}{2} + \epsilon}y^{-\frac{1}{2}} D^{-\frac{1}{2}} +\sideset{}{^\flat}{\sum}_{\substack{D_2 \gg D \\ D_1 \ll D_2 \\ y \ll HD_i \ll z }} K(HD_1D_2)^{-1}.
\end{align}

We first calculate $\sideset{}{^\flat}{\sum}_{\substack{D_2 \gg D \\ D_1 \ll D_2 \\ y \ll HD_i \ll z }} K(HD_1D_2)^{-1}$ in (\ref{uppVander}),  writing $D_1 = 2^{n_1}, D_2 = 2^{n_2}$ and $H = 2^{h_0}$, which is no more than
\begin{align}
\notag & K \sum_{n_2 \geq 0} \sum_{n_1 \leq n_2 + O(1)} \sum_{h_0 \geq \log_2(\frac{y}{2^{n_1}}) -O(1)} (2^{n_1}2^{n_2}2^{h_0})^{-1} \\
\label{KHD1D2-1}\ll & K y^{-1}  \sum_{n_2 \geq 0} 2^{-n_2}\sum_{n_1 \leq n_2 + O(1)} 1 \ll Ky^{-1}
\end{align}
Then we choose the exponent pair $(p,q)=(\frac{2}{7}, \frac{1}{14})$, see \cite[(8.16)-(8.17)]{IK04}, and $D = N^{\frac{5}{29} + \epsilon}y^{-\frac{9}{29}}$, which yields that
\begin{equation}\label{balance}
N^{\frac{2(p+q)+1}{6(p+1)}+\epsilon}D^{2-\frac{4p+4q+2}{3(p+1)}} + N^{1/2 + \epsilon}y^{-1/2} D^{-\frac{1}{2}} \ll N^{\frac{12}{29} + \epsilon}y^{-\frac{10}{29}}.
\end{equation}
Hence, the claim follows from (\ref{ubdcn}), (\ref{uppVander}), (\ref{KHD1D2-1}) and (\ref{balance}).
\end{proof}
We can easily get the following lemma by using Lemma \ref{key}.
\begin{lemma}\label{Balance}
 Let $N > K \geq 1$, $y=K^{\frac{1}{r+1}}$ and $z=N^{\frac{1}{r}}$. Then
$$
\sum_{N-K < n \leq N}|c_n(y,z)|^2 \ll K^{\frac{2}{r+1}}
$$
provided that
$$
K \gg
\begin{cases}
N^{\frac{9}{17} + \epsilon} & \text{if } r=2,\\
N^{\frac{r+1}{2r} + \epsilon} & \text{if } r \geq 3.
\end{cases}
$$
\end{lemma}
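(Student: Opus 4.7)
The plan is to apply Lemma \ref{key} directly with the prescribed values $y = K^{1/(r+1)}$ and $z = N^{1/r}$, and then just verify that the two error terms are each $\ll K^{2/(r+1)}$ under the stated hypotheses on $K$. First I would note that the choices are consistent with the setup of Lemma \ref{key}: since $K \leq N$ we have $y = K^{1/(r+1)} \leq N^{1/(r+1)} < N^{1/r} = z$, so $1 \leq y < z$ as required.

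For the case $r \geq 3$, I would invoke the bound (\ref{r>2}) from Lemma \ref{key}. The main term becomes
$$K y^{1-r} = K \cdot K^{(1-r)/(r+1)} = K^{2/(r+1)},$$
which already matches the claim. The remaining task is to absorb $N^{1/r + \epsilon}$ into $K^{2/(r+1)}$; this holds exactly when $K \gg N^{(r+1)/(2r) + \epsilon'}$ for a slightly larger $\epsilon' > 0$, which is precisely the hypothesis.

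For the more delicate case $r = 2$, I would first check the side condition $y \leq K^{1/2 - \epsilon}$ needed for (\ref{r=2}): with $y = K^{1/3}$ this is automatic once $\epsilon$ is chosen small. Applying (\ref{r=2}) then gives two terms, $K y^{-1} = K^{2/3}$, which is the desired bound, and $N^{12/29 + \epsilon} y^{-10/29} = N^{12/29 + \epsilon} K^{-10/87}$. Requiring the latter to be $\ll K^{2/3} = K^{58/87}$ amounts to $N^{12/29 + \epsilon} \ll K^{68/87}$, i.e.\ to
$$K \gg N^{(12/29)(87/68) + \epsilon'} = N^{9/17 + \epsilon'},$$
using $12 \cdot 87/(29 \cdot 68) = 36/68 = 9/17$. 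This is exactly the threshold in the statement.

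There is no real obstacle here beyond checking that the exponent arithmetic balances: the content of the lemma is that the numerics of Lemma \ref{key} are arranged so that, at the natural choice $y = K^{1/(r+1)}$, both terms come out to the same size $K^{2/(r+1)}$ precisely at the stated threshold for $K$. All the work has already been done in Lemma \ref{key}.
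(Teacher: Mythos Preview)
Your proposal is correct and matches the paper's approach: the paper simply states that Lemma \ref{Balance} follows easily from Lemma \ref{key}, and your argument is exactly the verification that the two terms in (\ref{r>2}) (for $r\geq 3$) and in (\ref{r=2}) (for $r=2$) are each $\ll K^{2/(r+1)}$ under the stated thresholds on $K$. The exponent arithmetic you carry out, in particular $(12/29)(87/68)=9/17$, is accurate.
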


\begin{Rem}
The proof of  Lemma \ref{key} is motivated by the methods used in dealing with the Dirichlet divisor problem, see e.g. \cite{JS14}. In the above proof, we combine analytic methods with hyperbolic trick and van der Corput bounds. If we only use the analytic approach, we can also get a non-trivial result but the length $K$ must be longer than $N^{\frac{3}{5}+ \epsilon}$ in Lemma \ref{Balance}, which is weaker than our result. On the other hand, if we just use the hyperbolic trick and van der Corput bound, we cannot get any non-trivial improvement. The exponent $\frac{9}{17}$ is determined by the exponent pair $(p,q)=(\frac{2}{7}, \frac{1}{14})$. One could use a stronger exponent pair to obtain a similar result for slightly smaller $K$, but the improvement would not be very significant.
\end{Rem}
\section{Proof of the lower bound results}\label{pflow}
\setcounter{lemma}{0} \setcounter{theorem}{0}
\setcounter{equation}{0}
In this section we will prove the lower bound $\|F_H*g_1\|_{L_1} \gg H^\frac{1}{r+1}$ which, by the argument in the end of Section \ref{intro}, implies Theorem \ref{sqfre} and Theorem \ref{Mobius}. We first define a ``$q$-analog'' of $F_{H}$ as
$$
F_H^q(\alpha) := \frac{1}{q} \sum_{a=1}^{q} F_{H}\left(\alpha - \frac{a}{q}\right)= \sum_{\substack{|n-N| \leq H \\ n \equiv 0 \pmod{q}}}\left(1 - \frac{|n-N|}{H}\right) e(n \alpha).
$$
Let $y = H^{\frac{1}{r+1}}$ and $z = N^{\frac{1}{r}}$. By (\ref{FourCov}) and (\ref{FEx}), we have
\begin{align}
& F_H*g_1(\alpha) = \sum_{|n - N| \leq H} \left(1 - \frac{|n-N|}{H}\right) a_n e(n \alpha). \notag \\
= & \sum_{|n - N| \leq H} \left(1 - \frac{|n-N|}{H}\right) c_n(1,y)e(n \alpha)\notag \\
+ & \sum_{|n - N| \leq H} \left(1 - \frac{|n-N|}{H}\right) c_n(y,z)e(n \alpha) \notag \\
= & \sum_{d_0 \leq y} \mu(d_0) F_{H}^{d_0^r}(\alpha) + \sum_{|n - N| \leq H} \left(1 - \frac{|n-N|}{H}\right) c_n(y,z)e(n \alpha). \label{FHg1}
\end{align}
For the second term, we will use Lemma \ref{key} to handle it. Now we follow Balog-Rusza's argument to deal with the first term. We denote the first term of (\ref{FHg1}) by $g_2(\alpha)$ and transform it to
\begin{align*}
 g_2(\alpha) & = \sum_{d_0 \leq y} \frac{\mu(d_0)}{d_0^r} \sum_{a_0=1}^{d_0^r}F_H\left(\alpha - \frac{a_0}{d_0^r}\right)\\
 & = \sum_{d_0 \leq y}  \sum_{m|d_0} \sum_{\substack{a_0=1 \\ m^r \mid a_0 \\ (a_0/m^r,d_0^r/m^r)r\text{-free}}}^{d_0^r} \frac{\mu(d_0)}{d_0^r} F_H\left(\alpha - \frac{a_0}{d_0^r}\right).\\
\end{align*}
Writing $d_0 = md$ and $a_0 = m^r a$ and noticing that $\mu(d_0) = \mu(m)\mu(d)\1_{(m,d)=1}$, we obtain
\begin{align}
\notag g_2(\alpha) & = \sum_{d \leq y} \frac{\mu(d)}{d^r} \sum_{\substack{m \leq y/d \\ (m,d)=1}} \frac{\mu(m)}{m^r} \sum_{\substack{a=1 \\ (a,d^r)r\text{-free}}}^{d^r}F_H\left(\alpha - \frac{a}{d^r}\right)\\
\label{mubG} & = \sum_{d \leq y} \mu(d)b_dG_d(\alpha),
\end{align}
where
$$
b_d = \sum_{\substack{m \leq y/d \\ (m,d)=1}} \frac{\mu(m)}{m^r},
$$
and
\begin{equation}\label{DefGd}
G_d(\alpha) = \frac{1}{d^r} \sum_{\substack{a=1 \\ (a,d^r)r\text{-free}}}^{d^r}F_H\left(\alpha - \frac{a}{d^r}\right).
\end{equation}
It is clear that
\begin{equation}\label{sizebd}
\frac{1}{3} \leq 1 -(\frac{\pi^2}{6} - 1) \leq 1 - \sum_{d=2}^{\infty} d^{-r} \leq b_d \leq 1 + \sum_{d=2}^{\infty}d^{-2} \leq \frac{\pi^2}{6} \leq \frac{5}{3}.
\end{equation}
and by (\ref{Flow}) we have
\begin{equation}\label{FHbeta}
|F_{H}(\beta)| \geq \frac{H}{2},\quad \text{ whenever } \quad \|\beta\| \leq \frac{1}{2H}.
\end{equation}
To show that $|F_H|$ is large, we must define a set with positive density in $\T$, in which $|F_H|$ is large. For any $d \leq y$, we define the set
$$
\cX_d=\bigcup_{\substack{1 \leq a \leq d^r \\ (a,d^r)r\text{-free}}}\left[\frac{a}{d^r}-\frac{1}{2H}, \frac{a}{d^r}+\frac{1}{2H}\right].
$$

For any fixed $d \leq y$ and two distinct $1 \leq a_1, a_2 \leq d^r$ the distance between $\frac{a_1}{d^r}$ and ${\frac{a_2}{d^r}}$ is at least
\begin{equation}\label{dyH}
\frac{1}{d^r} \geq \frac{1}{y^r} = \frac{1}{H^{\frac{r}{r+1}}}.
\end{equation}
Thus we have
\begin{equation}\label{disXd}
\left[\frac{a_1}{d^r}-\frac{1}{2H}, \frac{a_1}{d^r}+\frac{1}{2H}\right] \bigcap \left[\frac{a_2}{d^r}-\frac{1}{2H}, \frac{a_2}{d^r}+\frac{1}{2H}\right] = \emptyset.
\end{equation}
In Balog-Rusza's argument, their $F(\alpha)$ is non-negative, so they can just pick up one large term $F(\alpha - \frac{a}{d^r})$ in the sum of $G_d(\alpha)$ directly as the lower bound of $G_d(\alpha)$. However, since $F_H(\alpha)$ is not always non-negative, we must be more careful.

\begin{lemma}\label{sizeGd}
Let $1 \leq d \leq y$. For any $\alpha \in \cX_d$, we have that
$$
|G_d(\alpha)| \geq \frac{1}{2}(1+o(1)) \frac{H}{d^r}.
$$
\end{lemma}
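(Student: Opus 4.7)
My plan is to isolate one dominant term in the sum defining $G_d(\alpha)$ and show that all remaining terms contribute only a negligible error, despite the fact that $F_H$ is complex-valued. Fix $d \le y$ and $\alpha \in \cX_d$. By (\ref{disXd}) the intervals in the union defining $\cX_d$ are pairwise disjoint, so there is a \emph{unique} residue $a^* \in \{1,\dots,d^r\}$ with $(a^*,d^r)$ $r$-free and $\|\alpha - a^*/d^r\| \le 1/(2H)$. For this $a^*$, estimate (\ref{FHbeta}) immediately yields the main-term bound $|F_H(\alpha - a^*/d^r)| \ge H/2$.

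Next I will control the contribution from $a \ne a^*$ using the decay estimate (\ref{Fupp}). For any such $a$, set $k = k(a) := \min(|a-a^*|, d^r-|a-a^*|) \ge 1$, so that the circle distance between $a^*/d^r$ and $a/d^r$ equals $k/d^r$. By the triangle inequality,
\[
\left\|\alpha - \frac{a}{d^r}\right\| \ge \frac{k}{d^r} - \frac{1}{2H} \ge \frac{k}{2d^r},
\]
where the second inequality uses $d^r \le y^r \le H^{r/(r+1)} \le H$, so that $1/d^r \ge 1/H$. Plugging this into (\ref{Fupp}) gives $|F_H(\alpha - a/d^r)| \ll d^{2r}/(Hk^2)$, and summing over $a$ (each value of $k$ is taken at most twice, and $k$ ranges up to $d^r/2$) yields
\[
\sum_{\substack{1 \le a \le d^r \\ a \ne a^*}} \left|F_H\!\left(\alpha - \tfrac{a}{d^r}\right)\right| \ll \frac{d^{2r}}{H} \sum_{k \ge 1} \frac{1}{k^2} \ll \frac{d^{2r}}{H}.
\]

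Combining the two bounds via the reverse triangle inequality in the definition (\ref{DefGd}) of $G_d(\alpha)$,
\[
|G_d(\alpha)| \ge \frac{1}{d^r}\left(\frac{H}{2} - O\!\left(\frac{d^{2r}}{H}\right)\right) = \frac{H}{2d^r}\left(1 + O\!\left(\frac{d^{2r}}{H^2}\right)\right).
\]
Since $d \le y = H^{1/(r+1)}$, we have $d^{2r}/H^2 \le H^{2r/(r+1) - 2} = H^{-2/(r+1)} = o(1)$, which gives the claimed lower bound.

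The only subtle point is the handling of cancellation in step two: because $F_H$ is not non-negative (unlike the classical Fej\'er kernel used by Balog--Ruzsa), we cannot simply drop the other terms and keep a sign. Instead, we must rely on the pointwise $1/(H\|\beta\|^2)$-decay of $F_H$ to show that the sum of absolute values of the secondary terms is truly negligible compared to $H/2$; this is what forces the constraint $d^r \ll H^{r/(r+1)}$, which is guaranteed precisely by $d \le y$.
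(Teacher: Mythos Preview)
Your proof is correct and follows essentially the same approach as the paper: isolate the unique dominant term via (\ref{FHbeta}), then bound the remaining terms in absolute value using the pointwise decay (\ref{Fupp}) together with the spacing $\|\alpha - a/d^r\| \ge k/d^r - 1/(2H)$. The only cosmetic difference is that you simplify $k/d^r - 1/(2H) \ge k/(2d^r)$ before summing, whereas the paper sums $2/(H(k/d^r - 1/(2H))^2)$ directly and only invokes $d \le y$ at the very end; both lead to the same $O(H^{-2/(r+1)})$ relative error.
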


\begin{proof}
For any $\alpha \in \cX_d$, by (\ref{disXd}) there is an unique $a$ such that $\alpha \in [\frac{a}{d^r}-\frac{1}{2H}, \frac{a}{d^r}+\frac{1}{2H}]$. Thus by (\ref{FHbeta}), (\ref{DefGd}), (\ref{Fupp}) and (\ref{dyH}), we have, for any $d \leq y$ and $\alpha \in \cX_d$,
\begin{align}
\notag |G_d(\alpha)| & \geq \frac{H}{2 d^r} - \frac{1}{d^r} \sum_{\substack{1 \leq a_1 \leq d^r \\ (a_1,d^r)r\text{-free} \\ a_1 \neq a}} \left|F_{H}\left(\alpha - \frac{a_1}{d^r}\right)\right| = \frac{H}{2 d^r} - \frac{1}{d^r}\sum_{\substack{1 \leq a_1 \leq d^r \\ (a_1,d^r)r\text{-free} \\ a_1 \neq a}} \frac{1}{H \|\alpha - \frac{a_1}{d^r}\|^2}\\
\notag & \geq \frac{H}{2 d^r} - \frac{1}{d^r}\sum_{\substack{1 \leq a_1 \leq d^r \\ (a_1,d^r)r\text{-free} \\ a_1 \neq a}} \frac{1}{H (\| \frac{a - a_1}{d^r}\| - \frac{1}{2H})^2} \geq \frac{H}{2 d^r} - \frac{1}{d^r}\sum_{k=1}^{d^r/2} \frac{2}{H (\frac{k}{d^r}-\frac{1}{2H})^2}  \\
\label{lowerGdalpha}& \geq \frac{H - O\left(H^{\frac{2r}{r+1}-1}\right)}{2 d^r} \geq \frac{1}{2}(1+o(1)) \frac{H}{d^r}
\end{align}
\end{proof}

\begin{Rem}
To deal with $\sum_{\substack{1 \leq a_1 \leq d^r \\ (a_1,d^r)r\text{-free} \\ a_1 \neq a}} |F(\alpha - \frac{a_1}{d^r})|$, one might consider to use the Cauchy-Schwarz inequality and then use the large sieve inequality. However, this method may fail.
\end{Rem}
Since the sets $\cX_d$ are not necessarily disjoint for different $d \leq y$, we cannot directly calculate the size of $\bigcup_{d} \cX_d$. In order to overcome this obstacle, we define pairwise disjoint $\cY_d$ which have the similar size as $\cX_d$. We define
\begin{equation}\label{DefYd}
\cY_d := \left\{\alpha \in \cX_d: \sum_{\substack{d' \leq y \\ d' \neq d}}|G_{d'}(\alpha)| \leq \frac{H}{20d^r} \right\}.
\end{equation}

\begin{lemma}\label{disYd}
The sets $\cY_d$ defined in (\ref{DefYd}), for $d \leq y$, are pairwise disjoint.
\end{lemma}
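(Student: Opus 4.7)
The plan is to argue by contradiction: suppose $\alpha \in \cY_{d_1} \cap \cY_{d_2}$ for some distinct $d_1, d_2 \leq y$, and then combine the pointwise lower bound on $|G_{d_j}(\alpha)|$ from Lemma \ref{sizeGd} with the cumulative upper bound built into the definition (\ref{DefYd}) of $\cY_{d_i}$ to derive two incompatible size comparisons between $d_1^r$ and $d_2^r$.

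First, since $\alpha \in \cY_{d_1}$, the single term $|G_{d_2}(\alpha)|$ is controlled by the whole sum in (\ref{DefYd}): because $d_2 \neq d_1$ and $d_2 \leq y$,
$$
|G_{d_2}(\alpha)| \;\leq\; \sum_{\substack{d' \leq y \\ d' \neq d_1}} |G_{d'}(\alpha)| \;\leq\; \frac{H}{20\, d_1^r}.
$$
On the other hand, $\alpha \in \cY_{d_2} \subseteq \cX_{d_2}$, so Lemma \ref{sizeGd} gives
$$
|G_{d_2}(\alpha)| \;\geq\; \tfrac{1}{2}(1+o(1))\, \frac{H}{d_2^r}.
$$
Combining these two inequalities forces $d_2^r \geq 10(1+o(1))\, d_1^r$. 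Exchanging the roles of $d_1$ and $d_2$ in the same argument (use $\alpha \in \cY_{d_2}$ to bound $|G_{d_1}(\alpha)|$ from above by $H/(20 d_2^r)$, and $\alpha \in \cX_{d_1}$ to bound it from below via Lemma \ref{sizeGd}) yields the symmetric inequality $d_1^r \geq 10(1+o(1))\, d_2^r$.

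For $H$ large enough the $o(1)$ term from (\ref{lowerGdalpha}) is arbitrarily small, so the two inequalities $d_2^r \geq 10(1+o(1))\, d_1^r$ and $d_1^r \geq 10(1+o(1))\, d_2^r$ cannot both hold, giving the contradiction. No real obstacle arises; the argument is purely a constants check, and the numerical choice $\frac{1}{20}$ in (\ref{DefYd}) was made precisely so that $\frac{1}{20} < \frac{1}{2}$ with enough slack to absorb the error term from Lemma \ref{sizeGd}.
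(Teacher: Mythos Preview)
Your argument is correct and follows essentially the same route as the paper: contradiction via the lower bound from Lemma \ref{sizeGd} against the upper bound built into (\ref{DefYd}). The only cosmetic difference is that the paper orders $d_1 < d_2$ at the outset, so a single comparison $|G_{d_1}(\alpha)| \geq (1+o(1))\tfrac{H}{2d_1^r}$ versus $|G_{d_1}(\alpha)| \leq \tfrac{H}{20 d_2^r}$ already gives the contradiction, whereas you run the argument symmetrically in both directions; either way works.
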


\begin{proof}
Suppose that $1 \leq d_1 < d_2 \leq y$ such that $\alpha \in \cY_{d_1} \cap \cY_{d_2} \neq \emptyset$. By Lemma \ref{sizeGd} and (\ref{DefYd}), we have
$$
|G_{d_1}(\alpha)| \geq (1+o(1))\frac{H}{2d_1^r} \quad \text{  and  } \quad |G_{d_1}(\alpha)| \leq \frac{H}{20d_2^r},
$$
which contradict each other.
\end{proof}
In case $\alpha \in \cY_d$, (\ref{mubG}), (\ref{lowerGdalpha}) and (\ref{sizebd}) imply that
\begin{equation}\label{lowg2}
|g_2(\alpha)| \geq b_d |G_d(\alpha)| - \sum_{\substack{d' \leq y \\ d' \neq d}}b_{d'}|G_{d'}(\alpha)| \gg \frac{H}{d^r}.
\end{equation}
Now we estimate the sizes of $\cX_d$ and $\cY_d$. First, it is easy to see that
$$
|\cX_d| =\frac{1}{H} \sum_{\substack{a=1 \\ (a,d^r)r\text{-free}}}^{d^r}1 = \frac{d^r}{H} \prod_{p|d}\left(1-\frac{1}{p^r}\right),
$$
and therefore
\begin{equation}\label{sizeXd}
\frac{d^r}{H} \ll |\cX_d| \leq \frac{d^r}{H}.
\end{equation}

Next, we estimate the size of $\cZ_d = \cX_d \setminus \cY_d$. If $\alpha \in \cZ_d$ then
$$
|G_d(\alpha)| \geq (1+o(1))\frac{H}{2 d^r} \quad \text{  and  } \quad \sum_{\substack{d'\leq y \\ d'\neq d}}|G_{d'}(\alpha)| \geq \frac{H}{20d^r}.
$$
This immediately implies that
\begin{equation}\label{upperZ}
\int_{\cZ_d} \left|G_d(\alpha)\right|\sum_{\substack{d'\leq y \\ d'\neq d}}|G_{d'}(\alpha)| d \alpha \geq |\cZ_d| \frac{H^2}{40d^{2r}}.
\end{equation}

Now we follow the argument of Balog-Ruzsa to prove a ``quasi-orthogonality'' property of $|G_d(\alpha)|$. Namely,
\begin{lemma}[quasi-orthogonality]\label{quasi}
For any $1 \leq d_1 < d_2 \leq y$, we have
$$
\int_{\T} |G_{d_1}(\alpha)||G_{d_2}(\alpha)| d \alpha \ll 1.
$$
\end{lemma}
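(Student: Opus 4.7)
The plan is to open up the product by the definition (\ref{DefGd}), reduce to a sum of pairwise integrals of shifted $|F_H|$, identify $|F_H|$ as the classical Fej\'er kernel, bound each such integral by a decay estimate, and finally exploit the $r$-free constraint to produce the necessary arithmetic separation between the ``spikes'' at $a_1/d_1^r$ and $a_2/d_2^r$.

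First, by (\ref{DefGd}),
$$\int_{\T} |G_{d_1}(\alpha)||G_{d_2}(\alpha)| d\alpha \leq \frac{1}{d_1^r d_2^r} \sum_{\substack{1\leq a_i\leq d_i^r \\ (a_i,d_i^r)\ r\text{-free}}} \int_{\T} \left|F_{H}\!\left(\alpha - \tfrac{a_1}{d_1^r}\right)\right|\left|F_{H}\!\left(\alpha - \tfrac{a_2}{d_2^r}\right)\right| d\alpha.$$
From (\ref{Flow}), $|F_{H}(\beta)| = \sin^2(\pi H \beta)/(H\sin^2(\pi\beta))$ is precisely the Fej\'er kernel $K_H(\beta) = \sum_{|h|<H}(1 - |h|/H)e(h\beta)$. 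Hence, by Parseval applied to the shifted kernels,
$$\int_{\T} |F_{H}(\alpha-x_1)||F_{H}(\alpha-x_2)| d\alpha = \sum_{|h|<H}\left(1 - \tfrac{|h|}{H}\right)^2 e(h(x_2-x_1)) = (K_H * K_H)(x_2 - x_1).$$
Splitting this convolution at $|\beta| = \|x_2 - x_1\|/2$ and using the pointwise bound $K_H(\beta) \ll \min(H, 1/(H\|\beta\|^2))$ together with $\|K_H\|_{L_1} \ll 1$, one gets
$$\int_{\T} |F_{H}(\alpha-x_1)||F_{H}(\alpha-x_2)| d\alpha \ll \min\!\left(H,\, \frac{1}{H\|x_1 - x_2\|^2}\right).$$

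The crucial arithmetic step is to check that the $r$-free conditions force $a_1/d_1^r \neq a_2/d_2^r$ whenever $d_1 \neq d_2$. Write $d_i = ef_i$ with $e = \gcd(d_1,d_2)$ and $\gcd(f_1,f_2)=1$. If $a_1/d_1^r = a_2/d_2^r$ then $a_1 f_2^r = a_2 f_1^r$, hence $f_j^r \mid a_j$ for $j=1,2$. Since $d_1 < d_2$ we have $f_2 > 1$, so choosing any prime $p \mid f_2$ gives $p \mid d_2$ and $p^r \mid a_2$, contradicting that $(a_2,d_2^r)$ is $r$-free. Setting $M = \operatorname{lcm}(d_1^r,d_2^r)$ and $g = \gcd(d_1^r,d_2^r)$, so $d_1^r d_2^r = gM$, the map $(a_1,a_2) \mapsto (a_1 d_2^r - a_2 d_1^r) \bmod d_1^r d_2^r$ is a group homomorphism with image of size $M$ and kernel of size $g$; hence each reachable value of $\|a_1/d_1^r - a_2/d_2^r\|$ lies in $\{n/M : 1 \leq n \leq M-1\}$ and is attained by at most $g$ admissible pairs.

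Combining these two ingredients yields
$$\int_{\T} |G_{d_1}(\alpha)||G_{d_2}(\alpha)| d\alpha \ll \frac{g}{d_1^r d_2^r} \sum_{n=1}^{M-1} \min\!\left(H,\, \frac{1}{H\|n/M\|^2}\right).$$
Using $\|n/M\| = \min(n, M-n)/M$, a standard dyadic estimate shows the inner sum is $\ll \min(M, M^2/H)$: if $M \leq H$ all terms are dominated by $M^2/(Hn^2)$ with sum $\ll M^2/H$, while if $M > H$ the terms with $n \leq M/H$ and with $n > M/H$ each contribute $\ll M$. Therefore
$$\int_{\T} |G_{d_1}(\alpha)||G_{d_2}(\alpha)| d\alpha \ll \frac{g \cdot \min(M, M^2/H)}{gM} = \min\!\left(1,\, \frac{M}{H}\right) \ll 1,$$
as required. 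I expect the main obstacle to be the separation argument in the middle paragraph — in particular, ruling out every zero-distance pair from the $r$-freeness condition and correctly counting multiplicities via $g$ and $M$ — as the remaining analytic input is a routine Fej\'er-kernel estimate.
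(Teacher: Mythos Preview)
Your proof is correct and follows essentially the same route as the paper: expand $|G_{d_1}||G_{d_2}|$ via (\ref{DefGd}), bound each cross term $\int_{\T}|F_H(\alpha-x_1)||F_H(\alpha-x_2)|\,d\alpha$ by $\min(H,1/(H\|x_1-x_2\|^2))$, use the $r$-free condition to rule out $m=0$, count pairs with a given difference as $(d_1^r,d_2^r)$, and sum over $m$. The one genuine variation is in the analytic step: the paper bounds the correlation integral by the pointwise chain $|F_H(\beta)||F_H(\alpha+\beta)|\le(|F_H(\beta)|+|F_H(\alpha+\beta)|)\min\{H,1/(H\|\alpha\|^2)\}$ and then integrates, whereas you identify $|F_H|$ with the classical Fej\'er kernel and invoke Parseval to get $(K_H*K_H)(x_2-x_1)$ before splitting the convolution. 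Both yield the same bound; your route is perhaps a touch cleaner but requires the observation that $|F_H|$ is exactly $K_H$, while the paper's argument works for any kernel satisfying (\ref{Fupp}).
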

\begin{proof}
First by (\ref{Fupp}) we observe that
$$
\|F_{H}\|_{L_1} = \int_{\T}|F_{H}(\alpha)| d \alpha  \ll 1.
$$
By (\ref{Fupp}), for any $\alpha \in \T$, we have
\begin{align}
& \int_{\T}|F_H(\beta)||F_{H}(\alpha + \beta)| d \beta\\
\leq & \int_{\T}(|F_H(\beta)|+|F_{H}(\alpha + \beta)|) \min \{|F_H(\beta)|, |F_H(\alpha + \beta)|\} d \beta \notag\\
\leq & \int_{\T}(|F_H(\beta)|+|F_{H}(\alpha + \beta)|) \min \left\{H, \frac{1}{H \|\beta\|^2}, \frac{1}{H \|\alpha+\beta\|^2}\right\} d \beta \notag\\
\ll & \int_{\T}(|F_H(\beta)|+|F_{H}(\alpha + \beta)|) \min \left\{H, \frac{1}{H (\|\beta\| + \|\beta +\alpha\|)^2} \right\} d \beta \notag\\
\leq & \int_{\T}(|F_H(\beta)|+|F_{H}(\alpha + \beta)|) \min \left\{H, \frac{1}{H \|\alpha\|^2} \right\} d \beta \notag\\
\leq & \min \left\{H, \frac{1}{H \|\alpha\|^2} \right\} \label{corF}
\end{align}
By (\ref{DefGd}) and (\ref{corF}) we have
\begin{align}
&\int_{\T} |G_{d_1}(\alpha)||G_{d_2}(\alpha)| d \alpha \notag\\
\leq &\frac{1}{d_1^r d_2^r} \sum_{\substack{a_1=1 \\ (a_1,d_1^r)r\text{-free}}}^{d_1^r} \sum_{\substack{a_2=1 \notag\\ (a_2,d_2^r)r\text{-free}}}^{d_2^r}\int_{\T}\left|F_H(\alpha - \frac{a_1}{d_1^r})F_H(\alpha - \frac{a_2}{d_2^r})\right| d \alpha \notag\\
\ll & \frac{1}{d_1^r d_2^r} \sum_{\substack{a_1=1 \\ (a_1,d_1^r)r\text{-free}}}^{d_1^r} \sum_{\substack{a_2=1 \\ (a_2,d_2^r)r\text{-free}}}^{d_2^r} \min\left\{H, \frac{1}{H \|\frac{a_1}{d_1^r} - \frac{a_2}{d_2^r}\|^2}\right\} \label{lastquasi}
\end{align}
We can write
$$
\left\|\frac{a_1}{d_1^r} - \frac{a_2}{d_2^r}\right\| = \frac{|m|}{[d_1^r,d_2^r]},
$$
where $m$ is the member of the residue class
$$
a_1 \frac{d_2^r}{(d_1^r,d_2^r)} - a_2\frac{d_1^r}{(d_1^r,d_2^r)} \pmod{[d_1^r,d_2^r]}
$$
with least absolute value. Note that $m=0$ does not appear because $d_1 \neq d_2$ and $(a_1,d_1^r),(a_2,d_2^r)$ are $r$-free. Given a non-zero $|m| \leq [d_1^r,d_2^r]/2$ the above holds when
$$
a_1 \frac{d_2^r}{(d_1^r,d_2^r)} \equiv m \pmod{\frac{d_1^r}{(d_1^r,d_2^r)}},
$$
which happens for exactly $(d_1^r,d_2^r)$ choices of $a_1 \pmod{d_1^r}$. When $m$ and $a_1$ are fixed, $a_2$ is uniquely determined. In (\ref{lastquasi}), we get that
\begin{align*}
& \int_{\T} |G_{d_1}(\alpha)||G_{d_2}(\alpha)| d \alpha \ll \frac{(d^r_1,d^r_2)}{d_1^rd_2^r} \sum_{1 \leq m \leq [d_1^r,d_2^r]/2} \min\left\{H, \frac{[d_1^r,d_2^r]^2}{Hm^2}\right\}\\
\ll & \frac{H}{[d_1^r, d_2^r]}\sum_{1 \leq m \leq [d_1^r,d_2^r]/H} 1 + \frac{[d_1^r,d_2^r]}{H}\sum_{m > [d_1^r,d_2^r]/H} \frac{1}{m^2} \ll 1.
\end{align*}
\end{proof}
By (\ref{upperZ}) and Lemma \ref{quasi}, we have
\begin{equation}\label{sizeZd}
|\cZ_d| \ll \frac{d^{2r}y}{H^2}
\end{equation}
Let $\epsilon > 0$ be a small fixed constant. By (\ref{sizeXd}) and (\ref{sizeZd}) we have
\begin{equation}\label{sizeYd}
|\cY_d| = |\cX_d| - |\cZ_d|\gg \frac{d^r}{H} \gg |\cX_d|, \quad \text{ for  every } \quad d \leq \epsilon y.
\end{equation}
Let
$$
\cY = \bigcup_{d \leq \epsilon y} \cY_d.
$$
We have
\begin{equation}\label{sizeY}
|\cY| \leq \sum_{d \leq \epsilon y} |\cX_d| \leq \frac{(\epsilon y)^{r+1}}{H} = \epsilon^{r+1}.
\end{equation}

By (\ref{FHg1}), Cauchy-Schwarz inequality and Parseval's identity, we have
\begin{align}
\notag & \|F*g_1\|_{L_1} = \int_{\T} |F*g_1(\alpha)| d \alpha \geq \int_{\cY} |F*g_1(\alpha)| d \alpha\\
\label {Ycn2}\geq & \int_{\cY}|g_2(\alpha)| d \alpha - \left(|\cY| \sum_{|n-N| \leq H} |c_n(y,z)|^2\right)^{\frac{1}{2}}.
\end{align}
By (\ref{lowg2}), (\ref{sizeYd}) and Lemma \ref{disYd},
$$
\int_{\cY}|g_2(\alpha)| d \alpha \gg  \sum_{d \leq \epsilon y} \frac{H}{d^r} |\cY_d| \gg \epsilon y \gg \epsilon H^{\frac{1}{r+1}},
$$
and by Lemma \ref{Balance} and (\ref{sizeY})
$$
\left(|\cY| \sum_{|n-N| \leq H} |c_n(y,z)|^2\right)^{\frac{1}{2}} \ll \epsilon^{\frac{r+1}{2}}H^{\frac{1}{r+1}},
$$
when $H$ satisfies the condition of $K$ in Lemma \ref{Balance}.
Combining these with (\ref{Ycn2}) implies, when $\epsilon > 0$ is small enough, $\|F_H*g_1\|_{L_1} \gg H^\frac{1}{r+1}$. As explained in the beginning of this section, this completes the proof of Theorem \ref{sqfre} and the proof of the lower bound case of Theorem \ref{rfree}.

\section{Proof of the upper bound case of theorem \ref{rfree}}\label{pfupp}
\setcounter{lemma}{0} \setcounter{theorem}{0}
\setcounter{equation}{0}
In this section, we will prove the upper for the $L_1$ norm of the exponential sum over $r$-free numbers. This proof does not involve $F_{H}$. Instead we involve difference of two Fej\'er Kernels, see (\ref{FejDifFour}), and the argument is essentially same as Balog-Ruzsa's.

Let $D = H^{\frac{1}{r+1}}$, and let $S$ be such that $2^{S-1} < D \leq 2^S $. Note that
$$
S \leq \frac{1}{r+1}\log_2 H + 1.
$$
Recall (\ref{cnyz}) and decompose $a_n$ as follows
$$
a_n = \sum_{s=1}^{S} c_n(D2^{-s}, D2^{-(s-1)}) + c_{n}(D, N^{\frac{1}{r}}).
$$
We have
\begin{align}
\notag& \int_{\T} \left|\sum_{|n-N| \leq H} a_n e(n \alpha) \right| d \alpha \\
\label{anupp}\leq & \sum_{s=1}^S \int_{\T} \left|\sum_{|n-N| \leq H} c_n(D2^{-s}, D2^{-(s-1)}) e(n \alpha)\right| d \alpha + \int_{\T} \left|\sum_{|n-N|\leq H}c_{n}(D, N^{\frac{1}{r}})e(n \alpha)\right| d \alpha.
\end{align}
The second term can be estimated as in the proof of the lower bound case: The Cauchy-Schwarz inequality, Parseval's identity and Lemma \ref{Balance} imply that
\begin{equation}\label{cntailupp}
\int_{\T} \left|\sum_{|n-N|\leq H}c_{n}(D, N^{1/r})e(n \alpha)\right| d \alpha \ll H^{1/(r+1)}.
\end{equation}
The remaining task is to estimate the first term. Let $K = H 2^{-rs}$ and rewrite
\begin{align*}
& \sum_{|n-N| \leq H} c_n(D2^{-s}, D2^{-(s-1)}) e(n \alpha) = \sum_{|n|\leq H}c_{n+N}(D2^{-s}, D2^{-(s-1)}) e((n+N) \alpha)\\
= & \sum_{|n| \leq H+K} \min\left\{1, \frac{H+K-|n|}{K}\right\} c_{N+n}(D2^{-s}, D2^{-(s-1)})  e((n+N) \alpha) \\
- & \sum_{H < |n| \leq H+K} \frac{H + K -|n|}{K} c_{n+N}(D2^{-s}, D2^{-(s-1)})  e((n+N) \alpha)\\
=: & \Sigma_3(s) + \Sigma_4(s).
\end{align*}
Before estimating each term, let us explain the motivation of this decomposition. The coefficients in $\Sigma_3(s)$ comes from the difference of two Fej\'er Kernels. By involving this difference of Fej\'er Kernels we can make the coefficient ``smooth'', which is useful in the later integral. However, the payoff is that we should add the ``tails''--$\Sigma_4(s)$ from $H$ to $H+K$. Fortunately, we can control the ``tails'' using Lemma \ref{key}.

Let us estimate $\int_{\T} \Sigma_4(s)$ first. In fact by the Cauchy-Schwarz inequality and Parseval's identity
\begin{align}
\notag & \int_{\T} |\Sigma_4(s)| d \alpha \leq \left(\sum_{H < |n| \leq H+K}|c_{n+N}(D2^{-s}, D2^{-(s-1)})|^2\right)^{\frac{1}{2}}\\
\label{cnD2s2s-1}= & \left(\sum_{N+H< n \leq N+H+K} + \sum_{N-H-K \leq n <N-H} |c_{n}(D2^{-s}, D2^{-(s-1)})|^2\right)^{\frac{1}{2}}.
\end{align}
For $r=2$, we have
$$
D2^{-s} = H^{\frac{1}{3}}2^{-s} \leq H^{\frac{1}{2}-\epsilon}2^{-s} \leq (H2^{-2s})^{\frac{1}{2} - \epsilon} = K^{\frac{1}{2} - \epsilon},
$$
which implies that (\ref{cnD2s2s-1}) satisfies the condition of Lemma \ref{key}. By Lemma \ref{key}, (\ref{cnD2s2s-1}) is
\begin{align*}
\ll&
\begin{cases}
(K(D 2^{-s})^{1-r} + N^{\frac{12}{29}+\epsilon} )^{\frac{1}{2}} & r=2\\
(K(D 2^{-s})^{1-r} + N^{\frac{1}{r} + \epsilon} )^{\frac{1}{2}} & \text{otherwise}
\end{cases}\\
\ll &
\begin{cases}
H^{\frac{1}{3}}2^{-\frac{s}{2}}+N^{\frac{6}{29} + \epsilon} & r=2\\
H^{\frac{1}{r+1}}2^{-\frac{s}{2}} + N^{\frac{1}{2r} + \epsilon} & \text{otherwise}
\end{cases}
\end{align*}
Thus when $H$ satisfies (\ref{lengthHrfree}), we have
\begin{equation}\label{Sigma4upp}
\sum_{s = 1}^S \int_{\T} |\Sigma_4(s)| d \alpha \ll H^{\frac{1}{r+1}}
\end{equation}
The rest argument for estimating the contribution of the $\Sigma_3(s)$ part is same as Balog-Ruzsa's (2.11), but to make paper self-contained, we write down it.

\begin{lemma}
For any $ H \geq 0, K \geq 1, 1 \leq d \leq H+K $ and $M$, we have
$$
\sum_{\substack{|n|\leq H+K \\ n \equiv M \pmod{d}}} \min\left\{1, \frac{H+K-|n|}{K}\right\}e(n \alpha) \ll \min\left\{\frac{H+K}{d}, \frac{1}{\|d \alpha\|} , \frac{d}{K\|d \alpha\|^2}\right\}
$$
\end{lemma}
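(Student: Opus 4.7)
The plan is to parametrize the arithmetic progression by writing $n = M + dm$ and then extract the three bounds as the results of zero, one, and two applications of partial summation to the resulting sum in $m$. With $\beta := d\alpha$, the quantity becomes $e(M\alpha) \sum_{m \in \Z} g(m)\, e(m\beta)$, where $g(m) := \min\{1,(H+K-|M+dm|)/K\}$ if $|M+dm|\le H+K$ and $0$ otherwise. As a function of $m$, $g$ is a compactly supported trapezoid: $g \equiv 1$ on the plateau $|M+dm|\le H$, interpolating linearly to $0$ on two ramps of integer length $\asymp K/d$, with total support of size $\asymp (H+K)/d$. The trivial bound $(H+K)/d$ is then immediate from $|g| \le 1$.

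For the bound $1/\|d\alpha\|$ I would apply Abel summation once: setting $A(m) := \sum_{k\le m} e(k\beta)$ with base point $M_1$ to the left of the support of $g$, the boundary terms vanish and $\sum_m g(m)\,e(m\beta) = -\sum_m \Delta g(m)\,A(m)$, where $\Delta g(m) := g(m+1)-g(m)$. Since the trapezoid has total variation $\sum_m |\Delta g(m)| = O(1)$ and $|A(m)| \ll 1/\|\beta\|$, this yields the claimed bound.

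For the third bound, apply Abel summation a second time to get $\sum_m g(m)\,e(m\beta) = \sum_m \Delta^2 g(m)\,B(m)$, where $B(m) := \sum_{k\le m} A(k)$ and $\Delta^2 g(m) = g(m+2)-2g(m+1)+g(m)$. The second difference $\Delta^2 g$ is supported on only $O(1)$ integers (the four corners of the trapezoid where its slope changes), each with magnitude $\le d/K$, so $\sum_m |\Delta^2 g(m)| \ll d/K$. Summing the geometric series twice gives
\[
B(m) = \frac{e((m+2)\beta)-e((M_1+1)\beta)}{(e(\beta)-1)^2} - (m - M_1 + 1)\,\frac{e(M_1\beta)}{e(\beta)-1},
\]
namely a bounded piece of size $O(1/\|\beta\|^2)$ plus a term linear in $m$ with coefficient $O(1/\|\beta\|)$.

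The hard part will be cancelling this linear-in-$m$ piece of $B$, which would otherwise contaminate the bound with an unwanted factor of $(H+K)/K$ coming from the diameter of the support of $g$. This cancellation is supplied by the moment identities $\sum_m \Delta^2 g(m)=0$ and $\sum_m m\cdot \Delta^2 g(m)=0$, both of which hold for any compactly supported $g$ by re-indexing the shifted copies of $g$ in the definition of $\Delta^2 g$. With these identities only the bounded piece of $B$ contributes, yielding $\ll (d/K)\cdot \|\beta\|^{-2} = d/(K\|d\alpha\|^2)$. Taking the minimum of the three bounds then completes the proof.
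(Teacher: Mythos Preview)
Your proof is correct. The paper itself does not supply a proof of this lemma but simply cites Balog--Ruzsa \cite[(2.5)]{BR01}, so there is no in-paper argument to compare against; your double Abel-summation approach, with the moment identities $\sum_m \Delta^2 g(m)=0$ and $\sum_m m\,\Delta^2 g(m)=0$ killing the affine part of $B(m)$, is a clean self-contained derivation and is in fact the standard way this bound is obtained. One small caveat worth noting for completeness: when $K<2d$ the four corners of the trapezoid may sit within $O(1)$ of each other and your description of $\Delta^2 g$ as ``supported on $O(1)$ integers, each of magnitude $\le d/K$'' is no longer literally true; however in that regime $d/(K\|d\alpha\|^2)\ge 1/\|d\alpha\|$, so the third bound is never the minimum and the first two bounds already cover the claim.
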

\begin{proof}
See \cite[(2.5)]{BR01}.
\end{proof}

Recall the definition of $c_n$ in (\ref{cnyz}). By the above lemma, we have
\begin{align*}
& \int_{\T} |\Sigma_3(s)| d \alpha\\
 = &\int_{\T} \left| \sum_{|n|\leq H+K} \min \left\{1, \frac{H+K-|n|}{K}\right\}\sum_{\substack{d^r \mid (n+N) \\ D2^{-s} < d \leq D2^{-(s-1)}}}\mu(d)e((n+N)\alpha) \right| d \alpha\\
 \ll & \sum_{D2^{-s} < d \leq D2^{-(s-1)}}\int_{\T} \left| \sum_{\substack{|n|\leq H+K \\ n \equiv -N \pmod{d^r}}} \min \left\{1, \frac{H+K-|n|}{K}\right\}e(n \alpha) \right| d \alpha\\
\ll &  \sum_{D2^{-s} < d \leq D2^{-(s-1)}}\int_{\T}\min\left\{\frac{H+K}{d^r}, \frac{1}{\|d^r \alpha\|} , \frac{d^r}{K\|d^r \alpha\|^2}\right\} d \alpha\\
\ll &  \sum_{D2^{-s} < d \leq D2^{-(s-1)}}\int_{\T}\min\left\{\frac{H}{d^r}, \frac{1}{\|\alpha\|} , \frac{d^r}{K\| \alpha\|^2}\right\} d \alpha\\
\ll & \sum_{D2^{-s} < d \leq D2^{-(s-1)}} \left(\int_{0}^{\frac{d^r}{H}}\frac{H}{d^r} d \alpha +\int_{\frac{d^r}{H}}^{\frac{d^r}{K}}\frac{1}{\alpha}d \alpha + \int_{\frac{d^r}{K}}^{\frac{1}{2}}\frac{d^r}{K \alpha^2}d \alpha\right)\\
\ll & \sum_{D2^{-s} < d \leq D2^{-(s-1)}} \left(1 + \log \frac{H}{K}\right) \ll s 2^{-s}D.
\end{align*}
So it is easy to see that
\begin{equation}\label{Sigma3upp}
\sum_{s=1}^S \int_{\T} |\Sigma_3(s)| d \alpha \ll H^{\frac{1}{r+1}}
\end{equation}
Hence, the upper bound case of Theorem \ref{rfree} follows from (\ref{anupp}), (\ref{cntailupp}), (\ref{Sigma3upp}) and (\ref{Sigma4upp}).

\end{document}